\documentclass{amsart}
\usepackage{amsmath,amssymb,amsthm,pinlabel,tikz,hyperref,mathrsfs,color}
\usepackage{verbatim}
\usepackage{graphicx,overpic}

\renewcommand{\emph}{\textbf}

\newcommand{\nc}{\newcommand}
\nc{\dmo}{\DeclareMathOperator}
\dmo{\ra}{\rightarrow}
\dmo{\N}{\mathbb{N}}
\dmo{\Z}{\mathbb{Z}}
\dmo{\Q}{\mathbb{Q}}
\dmo{\R}{\mathbb{R}}
\dmo{\C}{\mathcal{C}}
\dmo{\AC}{\mathcal{AC}}
\dmo{\Mod}{Mod}
\dmo{\PMod}{PMod}
\dmo{\B}{B}
\dmo{\PB}{PB}
\dmo{\I}{\mathcal{I}}
\dmo{\el}{\ell_{\C}}
\dmo{\NN}{\mathcal{N}}
\dmo{\rk}{rk}
\nc{\s}{\mspace{1mu}}
  
\nc{\nt}{\newtheorem}

\nt{theorem}{Theorem}

\newtheorem{thm}{{\bf Theorem}}[section]
\newtheorem{lem}[thm]{{\bf Lemma}}
\newtheorem{cor}[thm]{{\bf Corollary}}
\newtheorem{prop}[thm]{{\bf Proposition}}

\newtheorem{claim}[thm]{Claim} 
\newtheorem{remark}[thm]{Remark}

\newtheorem{definition}[thm]{Definition}
\nt{example}[thm]{Example}
\numberwithin{equation}{section}

\title{The Farey tree and fibrations of the Whitehead link complement}

\author{Tali Pinsky}
\address{The Technion, Haifa and Monash University, Melbourne}
\email{talipi@technion.ac.il}
\date{\today}

\begin{document}
\begin{abstract}
We show a curious connection between
an example of dynamical forcing, where an existence of one periodic orbit for a dynamical system forces the existence of other periodic orbits, and the existence of different fibrations of a 3 dimensional manifold.

Specifically, we establish a one to one correspondence between fibrations of the Whitehead link complement and the set of ``simple orbit pairs" on the torus, defined in \cite{pinskyWajnryb}. The set of orbits is equipped with a complete order coming from a dynamical forcing relation, that corresponds to the Farey tree order of rational numbers. This corresponds to the order of the rational point on a fibered cone of the Whitehead link.
For the proof we obtain an explicit description of all monodromies of the different fibrations of the Whitehead link complement.
\end{abstract}

\maketitle

\section{Introduction}
The first example of a \emph{forcing relation} of periodic orbits of a dynamical system is Sharkovskii's proof, that there's a complete order relation $\succeq$ on the natural numbers so that the existence of an orbit of order $k$ for a continuous function $f:\R\to\R$ implies $f$ has an orbit $n$ for any $n$ smallwe than $k$ ($k\succeq n$) \cite{sharkovskii}. Strikingly, $3$ is the largest number and the existence of an orbit of order 3 implies existence of an orbit of period $n$ for any natural $n$. This case is often referred to as `period 3 implies chaos' \cite{LiYork1975period3}. 

It turns out that in dimension two, similar to the one dimensional case, one can tighten a given diffeomorphism $h$ with respect to a fixed periodic orbit $x=\{x_1,\dots,x_n\}$, to obtain a \emph{minimal representative} with the least possible number of periodic orbits. The minimal representative is the Thurston Nielsen canonical form, and all its periodic orbits are thus forced by the orbit $x$ (see \cite{boyland1994methods} for a review). 
 Methods for the 2-d case and specific examples have been obtained by Matsuoka \cite{Matsuoka1986analogue}, Boyland \cite{Boyland1992Rotation}, Jiang \cite{Jiang1993FixedPoints}, Handel \cite{Handel1997forcing3braids}, Los\cite{Los1997ForcingRelation}, Carvalho and Hall \cite{CarvalhoHall2002HorseshoeBraidType}, Kin \cite{Kin2008ForcingPartialOrder}, and many others.
This 2 dimensional procedure is explained in Section \ref{sec:Pairs}.

A homeomorphism of the 2-torus is of shear type if it is homotopic to a Dehn twist. 
Pinsky-Wajnryb \cite{pinskyWajnryb}, introduce the notion of a ``simple pair" for  two coexisting periodic orbits of a specific type, and proved a forcing result: Each simple pair forces infinitely many other periodic orbits that are also simple pairs (See Section \ref{sec:Pairs}).

Fixing a homeomorphism of the torus that has such a pair of orbits, one can puncture each periodic point in the simple pair and obtain a homeomorphism of the punctured torus. The resulting homeomorphism is of pseudo-Anosov type. 
In particular, its mapping torus is a hyperbolic  3-manifoldand a direct computation shows it is the Whitehead link exterior. 

Our main theorem is the following.

\begin{thm}\label{thm: 1-1 monodromies and simple pairs}
    There is a one-to-one correspondence between the set of fibrations of the Whitehead link exterior and 
the set of simple pairs corresponding to Farey neighbors $p,q\in\mathbb{Q}\cap[0,1]$. 
\end{thm}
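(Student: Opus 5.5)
The plan is to make both sides of the correspondence explicit and match them through the Thurston norm ball of the Whitehead link exterior $W$. Since $H^1(W;\Z)\cong\Z^2$, Thurston's theory says the fibrations of $W$, up to isotopy, are the primitive integral classes in the interiors of the fibered cones of the norm ball. For the Whitehead link the unit ball is a square with vertices $(\pm1,0),(0,\pm1)$ in a suitable basis $(a,b)$, and by the symmetry of the link all four top-dimensional faces are fibered. Up to the symmetries (orientation reversal $\phi\mapsto-\phi$ and the involution exchanging the two components), the fibrations therefore correspond to primitive $(m,n)$ with $m,n>0$, i.e.\ to $\tfrac{n}{m+n}\in\Q\cap(0,1)$, together with the boundary rays handled separately. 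The first step is to fix this identification concretely, for instance by writing $W$ as the mapping torus of the punctured-torus pseudo-Anosov obtained from a basic simple pair, as described in the introduction, and computing the norm with the Alexander polynomial (which gives the norm for fibered classes) or with explicit fibers.

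The second step is to produce, for each primitive class $(m,n)$ in the cone, an explicit monodromy. I would use Fried's suspension flow: every fibration in a fixed cone is a cross-section of the same pseudo-Anosov suspension flow. A fiber for $(m,n)$ is obtained by cut-and-paste (oriented double-curve sum) of $m$ copies of one fiber and $n$ copies of another, so its monodromy is a first-return map. Then I would induct on the Farey tree. If $p=\tfrac{a}{b}$ and $q=\tfrac{c}{d}$ are Farey neighbors, the class attached to the mediant $\tfrac{a+c}{b+d}$ is the sum of the classes attached to $p$ and $q$. In this situation I expect to show that the return map on the summed surface is a product of the two parent monodromies suitably composed, namely a shear on a torus with $b+d$ and two punctured orbits. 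This gives an explicit description of all monodromies.

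The third step is to identify each such monodromy with the minimal (Thurston–Nielsen) representative of a torus shear homeomorphism having a simple pair. For Farey neighbors $p,q$, I would read off from the explicit map the two punctured periodic orbits and their rotation numbers $p$ and $q$ relative to the shear. I would then check that they satisfy the definition of a simple pair from \cite{pinskyWajnryb}, as recalled in Section~\ref{sec:Pairs}. Conversely, given a simple pair with Farey-neighbor rotation numbers, the forcing construction yields a pseudo-Anosov on the punctured torus whose mapping torus is $W$ by the direct computation cited in the introduction. Its fiber class is then determined by the Euler characteristic, which equals the number of punctured points, together with the rotation data. Injectivity in both directions follows because distinct primitive classes give non-isotopic fibrations, and distinct simple pairs have distinct braid types and hence distinct rotation-number pairs.

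I expect the main obstacle to be the third step. It requires showing that the first-return map of the suspension flow, on the fiber obtained by cut-and-paste, is conjugate to the specific torus map carrying a simple pair, and not merely some pseudo-Anosov with the same punctures. I would handle it with train tracks: I would fix an invariant train track for the base monodromy on the doubly punctured torus and track how it behaves under the Farey mediant operation. The goal is to show that the transition matrices compose as in the Farey recursion, which matches the recursive structure of simple pairs established by forcing in \cite{pinskyWajnryb}.
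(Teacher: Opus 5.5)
Your correspondence is the wrong one, so Step 3 cannot succeed as stated. Since the linking number is $0$, the fiber of the class $(m,n)=m\lambda_1+n\lambda_2$ meets the boundary torus of one link component $K_1$ in $m$ coherently oriented longitudes, and that of $K_2$ in $n$. The monodromy permutes each family cyclically, so its two punctured orbits have periods $m$ and $n$. A simple pair $\frac ab\vee\frac cd$ has orbits of periods $b$ and $d$, so $(m,n)$ must be matched with a Farey pair whose denominators are $m$ and $n$. Your scheme instead sends $(m,n)$ to $\frac{n}{m+n}$ and then, via ``the class of the mediant is the sum of the parents' classes'', to the Farey parents of $\frac{n}{m+n}$. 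Those denominators add up to $m+n$ but are in general not $m$ and $n$. For $(m,n)=(5,2)$ you get $\frac27=\frac14\oplus\frac13$, whereas the monodromy has orbits of periods $5$ and $2$ (it is $\frac25\vee\frac12$ or its mirror $\frac12\vee\frac35$). So the additive Farey recursion on classes does not govern the monodromies, and no return-map computation will exhibit rotation numbers $p,q$ there. The correct matching, as in the paper, sends $m\lambda_1+n\lambda_2$ to the Farey pair with denominators $m,n$ whose smaller fraction has denominator $m$, and $n\lambda_1+m\lambda_2$ to its mirror under $x\mapsto 1-x$. For coprime $q,s$ these are the only Farey pairs with denominators $q,s$, so surjectivity becomes a counting argument. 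Also, the vertex rays $(1,0),(0,1)$ are not fibered: the once-punctured torus misses the other boundary torus. There is nothing there to handle separately.

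The mechanism of Steps 2--3 is also missing. The first-return map on a cut-and-paste sum is not a composition of the summands' monodromies, since these act on different surfaces. Agreement of train-track transition matrices would at most match dilatations, not conjugacy classes. The paper's key idea is concrete. It computes one monodromy explicitly: for $\lambda_1+\lambda_2$, via Goldsmith's branched double cover, this gives $\frac01\vee\frac11$ on the twice-punctured torus $T$. It then views $W$ as the mapping torus of that map, locates the once-punctured tori $\lambda_1,\lambda_2$ inside it, and notes that $\lambda_1+\lambda_2=T$. Hence $k\lambda_1+\ell\lambda_2=(k-\ell)\lambda_1+\ell T$ (or $kT+(\ell-k)\lambda_2$) is a stack of pieces. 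Each piece carries one vertical loop and an arc of a single horizontal loop. Flowing upward sends each piece to the one above, except the top piece. That one returns with an extra loop around one puncture and a Dehn twist on the horizontal loop, which is exactly the definition of a simple pair; whether $k>\ell$ determines the side of the extra loop. Finally, your converse direction invokes the introduction's assertion that the mapping torus of a simple-pair map is $W$. That is essentially the statement being proved, whereas the paper obtains surjectivity from the explicit fibrations.
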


We explicitly describe all monodromies of the different fibrations of the Whitehead link complement and give a train track for their monodromies.

\subsection*{Acknowledgments} 
I am grateful to Eiko Kin for suggesting this project and for many helpful discussions.

\section{The Farey diagram}

The Farey diagram is a way to give structure for all rational numbers, which we will use in the proof of Theorem~\ref{thm: 1-1 monodromies and simple pairs}. We consider only rational numbers between 0 and 1.

\begin{definition}
 Two rational numbers $\frac{p}{q}$ and $\frac{r}{s}$ are called \emph{Farey neighbors} if $|ps-rq|=1$. \\
 For Farey neighbors  $\frac{p}{q}$ and $\frac{r}{s}$, define their \emph{Farey child} to be the rational number $\frac{p}{q}\oplus\frac{r}{s}=\frac{p+r}{q+s}$.
\end{definition}

One constructs the Farey diagram for rationals in $[0,1]$ starting with $0=\frac{0}{1}$ and $1=\frac{1}{1}$. As these are Farey neighbors, draw an edge connecting them. Then consider their Farey child $\frac{1}{2}$.
A child $\frac{p+r}{q+s}$ is always a Farey neighbor of each of its parents (taking into account the parents are neighbors), thus connect $\frac{1}{2}$ with edges to both $\frac{0}{1}$ and $\frac{1}{1}$.

Next, consider the Farey child $\frac{0}{1}\oplus\frac{1}{2}=\frac{1}{3}$ and the Farey child $\frac{1}{1}\oplus\frac{1}{2}=\frac{2}{3}$, and connect each with edges to each of its parents.

Continue to construct other rational numbers by adding the child for any edge added in the previous step, and adding the edges connecting them to both their parents. 
It is not hard to prove that the diagram eventually reaches all rational numbers. Thus, given a pair of Farey neighbors, the set of all their descendants is exactly the set of all rational numbers between them along the real line.

\begin{figure}[!ht]
\centering
\begin{overpic}[height=5.5cm]{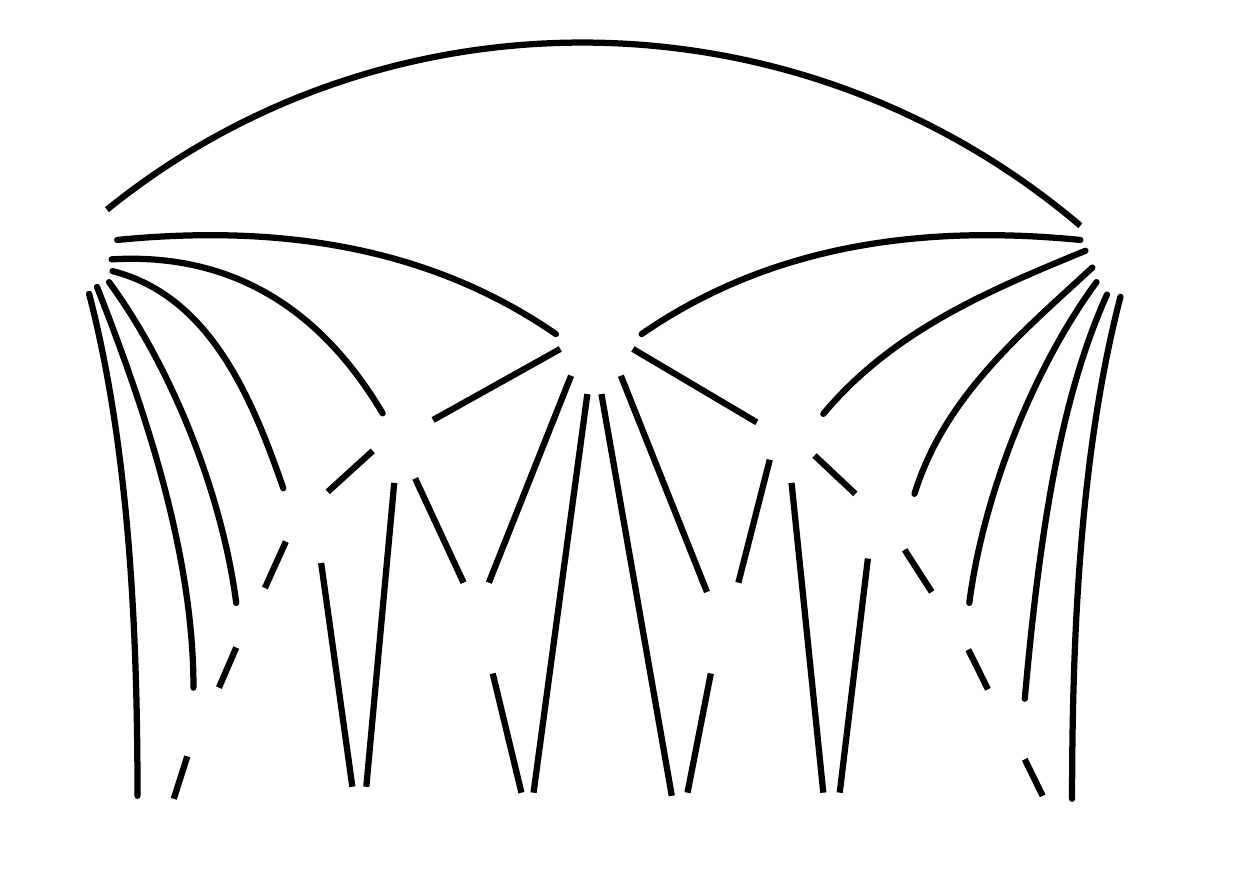}
    \put(5,50){$\frac{0}{1}$}
    \put(88,50){$\frac{1}{1}$}
    \put(47,42){$\frac{1}{2}$}
    \put(31,34){$\frac{1}{3}$}
    \put(62.5,34){$\frac{2}{3}$}
    \put(23.3,28){$\frac{1}{4}$}
    \put(70,28){$\frac{3}{4}$}
    \put(19,20){$\frac{1}{5}$}
    \put(37,19.5){$\frac{2}{5}$}
    \put(57,19.5){$\frac{3}{5}$}
    \put(75.3,20){$\frac{4}{5}$}
    \put(15,11){$\frac{1}{6}$}
    \put(80,11){$\frac{5}{6}$}
    \put(11,2){$\frac{1}{7}$}
     \put(28,2){$\frac{2}{7}$}
      \put(41.5,2){$\frac{3}{7}$}
       \put(53,2){$\frac{4}{7}$}
        \put(66,2){$\frac{5}{7}$}
         \put(84,2){$\frac{6}{7}$}
\end{overpic}
\caption{A part of the Farey diagram showing a few  Farey neighbors with small denominators between  0 and 1.}
\label{fig_tree}
\end{figure}

\section{Simple Pairs}\label{sec:Pairs}
To specify an orbit means for us to specify the manner in which the points of the orbits move around the manifold up to isotopy. Thus in dimension two it is not enough to mention the period of the orbit (as it is for Sharkovskii), and a periodic orbit $x$ for a diffeomorphism $f:S\to S$ is specified either as a braid, or equivalently as an element of the mapping class of ${S\setminus x}$. This is also called the Strong Nielsen type of the orbit, and this is what we mean by the word orbit in this paper. 

Once an orbit is specified, the general method to obtain orbits that are forced by it is to find its Thurston-Nielsen canonical form that is either periodic, pseudo Anosov, or reducible using the Bestvina-Handel algorithm. 
By works of Boyland \cite{boyland1994methods}, Jiang \cite{Jiang1993FixedPoints} and others, this representative, after corrections on the boundary, is \emph{minimal}.  i.e., any periodic orbit that exists for the canonical form exists (up to isotopy) for any diffeomorphism in the same isotopy class (or mapping class). Thus all orbits appearing in the canonical form are forced by the orbit we started with. These orbits typically can be found by a symbolic dynamics that is easy to establish for the canonical form. For example if one uses the Bestvina-Handel Algorithm \cite{BestvinaHandel1995TrainTracks} Or de Carvalho-Hall Pruning theory \cite{deCarvalhoHall2001Pruning}, one obtains the symbolic dynamics at the same time as the canonical form.

\begin{definition}\label{def:simple orbit}
    Let $h$ be a homeomorphism of $T^2$. An orbit $x=\{x_1,\dots x_n\}$ is called \emph{simple} is $T^2\setminus x$ has a deformation retract onto a graph $G\subset T^2$ So that
    $G$ is composed of $n$ closed loops $\beta_1,\dots,\beta_n$ that are mutually disjoint, and a single loop $\alpha$ intersecting each of $\beta_i$ at a single point, and so that the image under $h$ of a loop $b_i$ is a loop $b_j$, and the image of the loop $\alpha$ is $T_{b_i}(a)$, i.e. it is mapped to  itself with a Dehn twist along a loop $b_i$ for some $1\leq i\leq n$.

    The loops $b_i$ are called \emph{vertical}, and the loop $\alpha$ is called \emph{horizontal}. Since for any $i$ the intersection $\alpha\cap b_i$ is a single point, this gives a framing for $T^2$ (i.e. a basis for the homology).
\end{definition}

\begin{definition}\label{def:simple_pair}

Let $h$ be a homeomorphism of $T^2$. Consider two orbits $x=\{x_1,\dots x_n\}$ and $y=\{y_1,\dots y_m\}$ so that $n\leq m$. The pair $\{x, y\}$ is called a \emph{simple pair} 
if:

\begin{enumerate}
\item There exists a graph that is a deformation retract of $T^2\setminus(x\mspace{1mu} \cup\mspace{1mu} y)$ as in Figure~\ref{fig:graph and image} as follows: One loop $\alpha$ that we again call `horizontal'; A family of $m$ `vertical' loops $\beta_1,\dots,\beta_m$, not intersecting each other and each intersecting the horizontal loop $\alpha$ at a single point, so that each region in the complement of $\alpha\cup\bigcup_{i=1}^m\beta_i$ contains a single point of the $y$ orbit and at most a single point of the $x$ orbit.
\item The image of the graph under $h$, up to isotopy relative to the two orbits is as in Figure~\ref{fig:graph and image}, namely: Each vertical loop is sent to another vertical loop except one vertical loop that we denote by $b$, which is sent to another vertical loop plus a boundary of a disk punctured once by the $x$ orbit; The vertical loop $a$ is mapped to $T_{h(b)}(a)$, i.e. to itself with a Dehn twist along the image of $b$.
\end{enumerate}

Denote a simple pair of orbits by $x\vee y$. Note that the simple pair is not simply a union of two simple orbits, as an orbit isotopic to $x$ might not form a simple pair with $y$.

\begin{figure}[ht!]
\centering
\includegraphics[height=4cm]{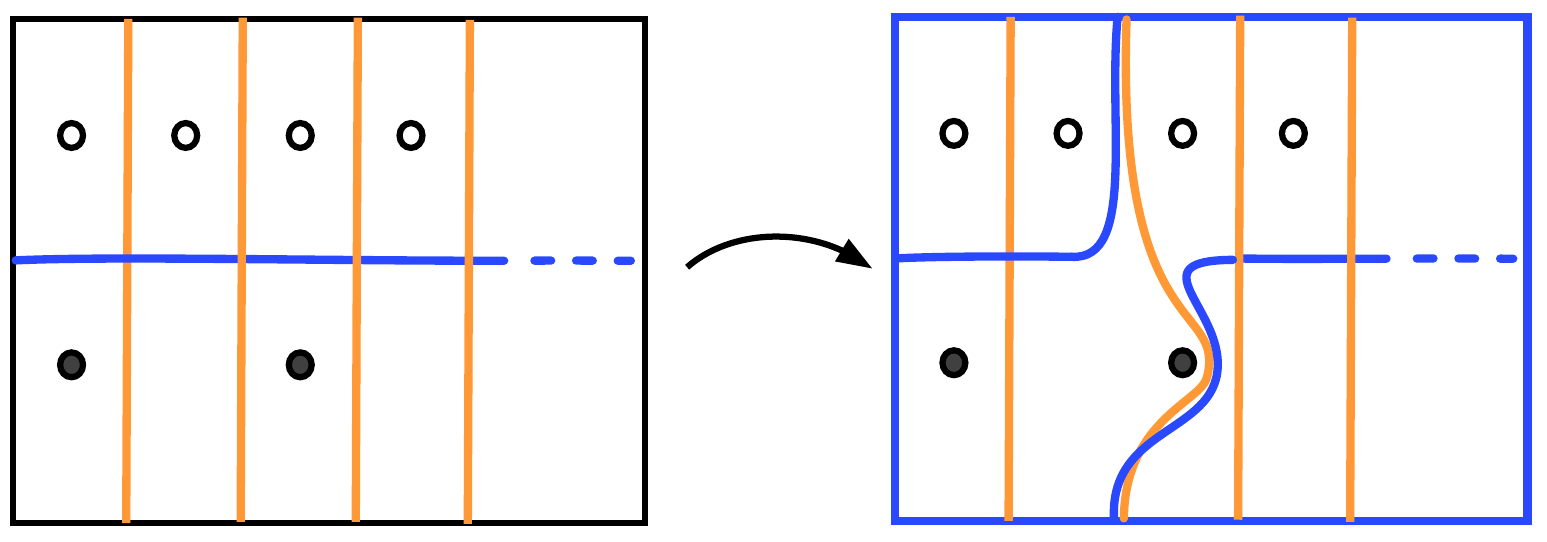}
\caption{A graph that is a spine for the complement of a simple pair of orbits for a homeomorphism $h$ on $T^2$ and their image under the homeomorphism $h$. The image of one of the vertical loops $b_i$ is another vertical loop $b_j$ up to a once punctured disk, that is depicted on the right of $b_j$ but can also be on its left. The horizontal loop $\alpha$ is mapped to $T_{h(b_i}(\alpha)$.}
\label{fig:graph and image}
\end{figure}
\end{definition}

\begin{remark}
    Consider the action of $h$ on the graph in Figure~\ref{fig:graph and image}, given on the right side of the figure. If one ignores the orbits $x$ and $y$ then all vertical circles of the graph are homotopic to a single vertical circle $\beta$ and the action of $h$ on $\beta$ is trivial, while the action of $h$ takes the horizontal circle $a$  to $T_\beta(a)$. Thus, if there exists a simple pair for a homeomorphism $h$, $h$ has to be of shear type, i.e. it is homotopic to a Dehn twist along $\beta$.
 \end{remark}

\begin{definition}(see \cite{DOEFF_1997})
    Let $T^2$ be a two dimensional torus with a basis for homology $\alpha,\beta$, and let $h$ be a shear homeomorphism of $T^2$ in that basis. Let $\tilde h$ be a lift of h to the universal cover $\R^2$, where the unit square is a fundamental domain. Then for every $p\in\mathbb{R}^2$ and any $v\in\mathbb{Z}^2$,
    \[
\tilde h (p+v)=\tilde h(p)+\begin{pmatrix}
1 & 0\\
1 & 1 
\end{pmatrix}v.
    \]
    
    For any periodic point $x$ of $h$ of period $p$, $\tilde h^p$ maps any lift $\tilde x$ of $x$ the same integer number $q$ along the horizontal axis away from $\tilde x$, in a standard choice of axis (and $\tilde x$ is possibly mapped some integer number along the vertical axis as well). We can then define the rotation number of $x$ to be  
    $r(x)= \frac{q}{p} \mod 1$.
    For a simple orbit or pair, we define the rotation number to be corresponding to the basis $\{\alpha,\beta\}$ induced by the oriented corresponding graph.
     \end{definition}

    \begin{lem}
    The rotation number $r(x)$ is well defined up to sign, and does not depend on the chosen graph and lift $\tilde h$ of $h$.
 \end{lem}

\begin{proof}
Assume first that we have a simple orbit $x$ for $h$, with a given graph of vertical and horizontal loops, and we have two different lifts, $\tilde h$ and $\tilde g$ of the diffeomorphism $h$.
Then $\tilde g(\tilde y)=\tilde h(\tilde y)+ \kappa$ where $\kappa\in\mathbb{Z}^2$, and this is true for any point $y\in\mathbb{R}^2$. 
Choose any preimage $\tilde x_1$ of a point $x_1\in x$.
$\tilde g^n(\tilde y)=\tilde h^n(\tilde y)+ n\kappa$, and thus
\[
r_{\tilde g}(x) =r_{\tilde h}(x)+nk_1\ \mod 1=r_{\tilde h}(x)\ \mod 1
\]
    
    Next, suppose an orbit of $h$ is simple with respect to two different graphs, so that both have the same structure as in 
    Definition \ref{def:simple_pair}. The definition only depends on the homology classes of the loops in the graph. If the two graphs are non homologous, this is equivalent to choosing a different horizontal/vertical direction, or equivalently a different basis of $\mathbb{R}^2$. Consider the two bases $\{a_1,b_1\}$ and $\{a_2,b_2\}$, and a lift $\tilde h$ of $h$ fixing the origin. As $b_1$ and $b_2$ are both integral and are invariant under $\tilde h$, we have $b_2=\pm b_1$. As we have $a_i\to a_i+b_i$,  we have $a_2=\pm(a_1+mb_1)$. Thus, the projections onto $a_1$ and $a_2$ agree up to sign, and the statement follows.
\end{proof}

On the other hand, a rotation number uniquely defines a simple orbit, as fixing a torus with a horizontal and vertical directions, it determines the order of the orbit and the number of vertical loops to the right (along the horizontal direction) each vertical loop is mapped to. Thus, we may specify a simple orbit using a rational number in $[0,1]$.

\begin{claim}
    the orbit $x$ corresponding to  $\frac{p}{q}\in[0,1]$ is equivalent, in the sense that the mapping class element of the punctured torus is conjugate, to the orbit $x'$ corresponding to $1-\frac{p}{q}$.
    \end{claim}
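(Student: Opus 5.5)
The idea is to conjugate by an orientation-reversing reflection of the torus that reverses the horizontal direction and fixes the vertical one. This reflection carries the orbit of rotation number $\frac{p}{q}$ to one of rotation number $-\frac{p}{q}\equiv 1-\frac{p}{q} \pmod 1$.

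Concretely, fix the framing $\{\alpha,\beta\}$ from Definition~\ref{def:simple orbit}, and let $h$ realize the simple orbit $x$ with rotation number $\frac{p}{q}$. Let $\tilde h$ be a lift to $\R^2$ satisfying $\tilde h(p+v)=\tilde h(p)+Av$ with $A=\begin{pmatrix}1&0\\1&1\end{pmatrix}$. Let $\sigma$ be the involution of $T^2$ induced by $(u,w)\mapsto(-u,w)$, which negates the $\alpha$-coordinate.

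First I would set $h'=\sigma h\sigma^{-1}$ and $x'=\sigma(x)$. The mapping class of $h'$ on $T^2\setminus x'$ is conjugate to that of $h$ on $T^2\setminus x$. The conjugation is by the homeomorphism $\sigma\colon T^2\setminus x\to T^2\setminus x'$, which is the meaning of ``equivalent'' in the claim. Note that $\sigma$ is orientation reversing, so the conjugacy holds in the extended mapping class group. If the claim requires an orientation-preserving conjugator, I would instead use $\sigma'=(u,w)\mapsto(-u,-w)$ composed with suitable adjustments; checking which version the paper needs is a side point.

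Next I would verify that $x'$ is again a simple orbit for $h'$. The graph $\sigma(G)$ is a spine of $T^2\setminus x'$ consisting of vertical loops $\sigma(\beta_i)$ and a horizontal loop $\sigma(\alpha)$ with the same intersection pattern as before. The map $h'$ permutes the vertical loops, and it sends $\sigma(\alpha)$ to $\sigma(T_{\beta_i}(\alpha))=T^{\pm1}_{\sigma(\beta_i)}(\sigma(\alpha))$. Here the twist direction flips because $\sigma$ reverses orientation. I would then observe that the sign is absorbed by reorienting $\beta$, so $h'$ is still of shear type in the basis $\{\sigma(\alpha),\sigma(\beta)\}$. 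This is consistent with the earlier Lemma, which says the rotation number is only defined up to sign. The linear part of $\sigma$ is $D=\mathrm{diag}(-1,1)$, and $DAD^{-1}=\begin{pmatrix}1&0\\-1&1\end{pmatrix}$, which is again a shear.

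Then I would compute the rotation number. The lift $\tilde h'=\tilde\sigma\tilde h\tilde\sigma^{-1}$ satisfies $(\tilde h')^q(\tilde\sigma\tilde x)=\tilde\sigma(\tilde h^q\tilde x)$. Since $\tilde h^q$ displaces $\tilde x$ horizontally by $p$, the map $(\tilde h')^q$ displaces $\tilde\sigma\tilde x$ horizontally by $-p$. Hence $r(x')=-\frac{p}{q}\equiv 1-\frac{p}{q}\pmod 1$.

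Finally, the paper has already remarked that a rotation number together with the framing determines the simple orbit. It follows that $x'$ is the orbit corresponding to $1-\frac{p}{q}$, which proves the claim.

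The main obstacle is the bookkeeping of signs and orientations. I need the conjugated map to fit the normalized form of the definition, namely a positive twist with the standard matrix, rather than its mirror. If the mirror appears, I would compose with the reorientation $\beta\mapsto-\beta$, or equivalently use the $\pm$ freedom from the Lemma, and I would check that this reorientation does not change the horizontal displacement modulo $1$.
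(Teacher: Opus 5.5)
There is a genuine gap, and it is the orientation issue you set aside as a side point. Conjugating by the reflection $\sigma(u,w)=(-u,w)$ reverses the handedness of the shear: $\sigma T_{\beta_i}\sigma^{-1}=T^{-1}_{\sigma(\beta_i)}$, which is what $DAD^{-1}=\begin{pmatrix}1&0\\-1&1\end{pmatrix}$ records. Your repair by ``reorienting $\beta$'' does not work. A Dehn twist depends only on the unoriented curve and the orientation of the surface. In matrix terms, $DAD^{-1}$ becomes $A$ again only in a negatively oriented basis such as $\{e_1,-e_2\}$ or $\{\sigma(\alpha),\sigma(\beta)\}$, i.e.\ only after reversing the orientation of $T^2$.

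You also mix bases. In $\{\sigma(\alpha),\sigma(\beta)\}$, where $h'$ does have matrix $A$, the horizontal displacement of $\tilde\sigma\tilde x$ is $+p$, so the rotation number there is $\frac pq$. The value $-\frac pq$ comes from the original basis, in which $h'$ is not in normalized form. In every positively oriented basis, $h'_*$ on $H_1(T^2)$ is $SL_2(\Z)$-conjugate to $\begin{pmatrix}1&0\\-1&1\end{pmatrix}$, which is not $SL_2(\Z)$-conjugate to $A$. Hence no choice of graph makes $x'$ a simple orbit of $h'$ in the sense of Definition~\ref{def:simple orbit}. So your step ``$x'$ is again a simple orbit'' fails. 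The pair $(h',x')$ is the mirror image of the simple orbit with rotation number $1-\frac pq$, not that orbit. The conjugacy you produce lives only in the extended mapping class group, and even there you have not linked $h$ to the normalized orbit of rotation number $1-\frac pq$.

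The fix is the map you mentioned and dismissed, and it needs no ``suitable adjustments''; it is exactly the paper's proof. Take $\rho$ to be the rotation by $\pi$, $(u,w)\mapsto(-u,-w)$, about an intersection point of $\alpha$ with a vertical loop. It is orientation preserving, so $\rho T_{\beta_i}\rho^{-1}=T_{\rho(\beta_i)}$. It carries vertical loops to vertical loops and $\alpha$ to itself. Its linear part $-I$ commutes with $A$, so $\tilde\rho\tilde h\tilde\rho^{-1}(z+v)=\tilde\rho\tilde h\tilde\rho^{-1}(z)+Av$ with no change of basis at all. Your displacement computation then goes through verbatim: $(\tilde\rho\tilde h\tilde\rho^{-1})^q$ moves $\tilde\rho\tilde x$ horizontally by $-p$. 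This gives rotation number $1-\frac pq$ in the same positively oriented framing, and $\rho$ realizes the conjugacy in the honest mapping class group.
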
 

\begin{proof}
    A rotation by $\pi$ around an intersection point of $\alpha$ with one of the vertical loops takes $x$ to $x'$ showing they are equivalent. It takes vertical loops to vertical loops and the horizontal loop to itself, proving that the mapping class elements on the complements are conjugate.
\end{proof}

The author is grateful to Bronek Wajnryb for suggesting the following example.

\begin{example}\label{ex:non simple orbit}
There exists a homeomorphism $h$ of $T^2$ that has a single periodic orbit that is a non-simple orbit with rotation number $\frac{1}{2}$. This shows there exist non-simple orbits for shear homeomorphisms, and that the rotation number does not characterize the orbit.

\end{example}

\begin{proof}
 Consider a two dimensional torus $T^2$, and fix a horizontal-vertical framing, with coordinates by $(s,t)\in [0,1]\times[0,1]$. Define a homeomorphism $g$ as follows: $g$ maps the left half  $\{0\leq s\leq \frac{1}{2}$ of the torus to the right half.
 Points with $(s,t)$ with $s$ coordinate between $0$ and $\frac{1}{2}-\frac{1}{10}$ are mapped to $(s',t)$  where $s'= s+1/2 \mod 1$. Points with $s$ coordinate between $\frac{1}{2}-\frac{1}{10}$ and $\frac{1}{2}$ are mapped to $(s',t+h(s))$ where $h(s)$ slowly increases over this segment from $0$ to an  irrational angle $\alpha$, so that $0<\alpha<\frac{1}{10}$.
 For the right half of the torus, a point $(s,t)$ is mapped to a point $(s'+\delta(s), t+h(s)$) where $h(s)$ is a continuous map monotonically increasing from $\alpha$ for $s=\frac{1}{2}$ to $1$ for $s=1$, so that $h(3/4)=1/2$. The function $\delta(s)$ is a scaling of the sine function, so that vertical loops with $\frac{1}{2}\leq s \leq \frac{3}{4}$ move to the left from the corresponding loops  $l_{s'}$ and vertical loops with $\frac{3}{4}\leq s \leq 1$ move to the right from the corresponding loops $l_{s'}$, but is small enough so that the function stays one to one. All together $g$ maps vertical loops to vertical loops, and the loops  $L=\{l_0,l_\frac{1}{4}, l_\frac{1}{2}, l_\frac{3}{4}\}$ are invariant as a set. 

\begin{figure}[ht]
    \centering
    \begin{overpic}[width=5cm]{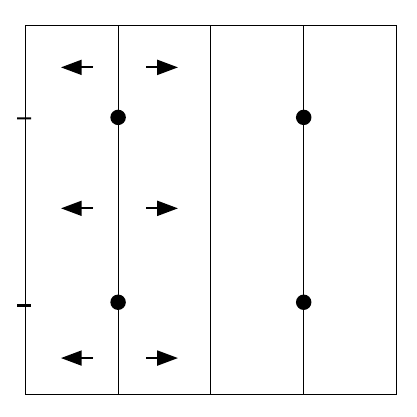}
    \put(-1,26){$\frac{1}{4}$}
    \put(-1,70){$\frac{3}{4}$}
    \put(27,-2){$\frac{1}{4}$}
    \put(49,-2){$\frac{1}{2}$}
    \put(72,-2){$\frac{3}{4}$}
     \end{overpic}
    \caption{The torus $T^2$, some vertical loops used for the definition of $g$ and $h$, and the unique periodic orbit with four points and rotation number $\frac{1}{2}$.}
    \label{fig:non simple}
\end{figure}

It is easy to see the only periodic orbits for $g$ are on the loops $l_\frac{1}{4}$  and $l_\frac{3}{4}$. This is because odd powers of $g$ switch the two halves of the torus and thus there is no periodic orbit with odd order. For nonzero even powers of $g$, every point on a vertical loop not in $L$ is mapped to vertical loops closer and closer to $l_0$ or $l_\frac{1}{2}$, and thus cannot be periodic. Points on $l_0$ and $l_\frac{1}{2}$ are mapped back to the same vertical loop rotated by some power of the irrational angle $\alpha$, and points on $l_\frac{1}{4}$  and $l_\frac{3}{4}$ are mapped under $g^2$ back to the same vertical loop rotated by $1/2$, and by $g^4$ back to themselves.

Finally, one can add a term that is a scaling of $\sin^2(t)$  for points on $l_\frac{3}{4}$ (and slowly decreasing on vertical loops in a small collar neighborhood of this loop) to leave a unique periodic orbit there, and have all other points move upwards and limit onto this orbit from below. This results in a homomorphism $h$ on $T^2$, of shear type, that has a unique periodic orbit with rotation number $\frac{1}{2}$ which is clearly not the simple orbit of rotation number $\frac{1}{2}$, as it has period 4 instead of 2.
\end{proof}

\begin{lem}
For a simple pair $x\vee y$, the rotation numbers are Farey neighbors.
\end{lem}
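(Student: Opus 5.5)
We read off both rotation numbers directly from the combinatorics of the graph in Definition~\ref{def:simple_pair} and then compute a determinant. Fix the spine of $T^2\setminus(x\cup y)$ from Definition~\ref{def:simple_pair}, with horizontal loop $\alpha$ and vertical loops $\beta_1,\dots,\beta_m$ in cyclic order along $\alpha$. By condition (1), the $m$ complementary regions (annular strips between consecutive vertical loops) each contain exactly one point of $y$. Write $n$ for the number of strips that also contain a point of $x$. By condition (2), $h$ permutes the vertical loops up to the once-punctured disk correction, and preserves their cyclic order because $h$ is a homeomorphism. Hence $h$ acts on the strips as a cyclic rotation by some shift $s$, with $y$ cycling through all $m$ strips. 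Thus $y$ is a single orbit of period $m$, and we can read off $r(y)=s/m$ with $\gcd(s,m)=1$, exactly as in the remark after the rotation number lemma for a simple orbit.

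Next we track $x$. The strips containing $x$ form a subset of size $n$ invariant under the rotation by $s$ in $\mathbb{Z}/m$, up to one defect. The single exceptional loop $b$ is mapped to a vertical loop plus the boundary of a disk punctured once by $x$. We will interpret this as follows: in the collapsed picture where we forget $y$ and keep only the $n$ marked strips, the $x$-points move by the shift induced on the $n$ marked strips, which is a rotation by some $s'$ of $\mathbb{Z}/n$. The single crossing of $b$ accounts for exactly one extra step of the $x$ point across one vertical loop. So $r(x)=s'/n$, and $x$ is simple with respect to the subgraph obtained by deleting the vertical loops that do not separate $x$-strips.

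The key step is the counting relation between $s,m,s',n$. Going once around the torus horizontally, the $x$-strips occupy positions $0\le k_1<\dots<k_n<m$. The condition that $h$ maps $x$-strips to $x$-strips except for the one-step defect at $b$ says that the marked set is a \emph{balanced} (Sturmian/Christoffel) word: it is an orbit segment of a rotation by $s$ on $\mathbb{Z}/m$ cut at one place. For such words the standard fact is that the number of marked positions among the first $s$ steps, call it $s'$, satisfies $s' m - s n=\pm1$. We will prove this by lifting to $\mathbb{R}^2$ via $\tilde h$ and counting lattice translations. Iterate $\tilde h^{m}$ on a lift of a $y$-point; it translates by $s$ horizontally. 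Iterate $\tilde h^{n}$ on a lift of an $x$-point; it translates by $s'$. Compare the two along a fundamental domain. The single defect at $b$ contributes exactly one unit of discrepancy per full period, which gives $|s'm-sn|=1$, i.e.\ $r(x),r(y)$ are Farey neighbors.

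We expect the main obstacle to be making the one-defect argument rigorous. We must show that the once-punctured-disk correction in condition (2) produces a discrepancy of exactly $\pm1$ in the determinant, and not $0$ or something larger. We would first try induction on $m$ via the Farey tree. Collapsing a strip containing only a $y$-point, adjacent to the defect, should yield a simple pair with smaller data $(n,m-n)$ or $(n,m)\mapsto(m-n,n)$, mirroring the Euclidean algorithm. The base case $m=n$ or $n=1$ is checked directly, and the determinant $s'm-sn$ is preserved by each Farey step.
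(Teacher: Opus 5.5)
Your strategy is the paper's: compare how far the two orbits travel horizontally during one period of the shorter orbit, with the bump at $b$ producing the $\pm1$. The paper follows the point in the bump together with the other point of its rectangle for one short period and observes that they end one position apart.

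As written, though, the proposal does not prove the step you single out as the crux. The sentence that the single defect ``contributes exactly one unit of discrepancy per full period'' only restates $|s'm-sn|=1$. The appeal to a Sturmian/Christoffel ``standard fact'' assumes the marked set is such a word, which is exactly what must be extracted from condition (2). The proposed Farey-tree induction would require building a new homeomorphism carrying a smaller simple pair, and that is not addressed. So the argument is incomplete at its only nontrivial point.

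The gap closes with a short local analysis; no induction is needed.

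Label the annular strips $A_1,\dots,A_m$ cyclically, with $A_k$ between $\beta_k$ and $\beta_{k+1}$. Let $b=\beta_{i+1}$, and suppose $h(b)$ is $\beta_j$ pushed around the $x$-point of $A_j$; the left-bump case is symmetric. Each image annulus again contains exactly one $y$-point and cyclic order is preserved. Hence $h(\beta_k)=\beta_{k+s}$ for $k\neq i+1$, where $s=j-1-i$.

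Consequences for the strips:
\begin{itemize}
\item For $k\neq i,i+1$, $h(A_k)=A_{k+s}$ rel the orbits, so its $y$-point and its $x$-point (if any) advance exactly $s$ strips.
\item $h(A_i)$ is $A_{j-1}$ together with the bump point, and $h(A_{i+1})$ is $A_j$ minus it. So every $y$-point advances $s$ strips.
\item $h(A_i)$ contains at most one $x$-point and already contains the bump point. Hence $A_{j-1}$ has no $x$-point, $A_i$ has exactly one, and that point advances $s+1$ strips.
\item $A_{i+1}$ contains no $x$-point, since $h(A_{i+1})$ contains none.
\end{itemize}

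Since $x$ is a single orbit of period $n$, its point passes through $A_i$ exactly once per period. For the lift $\tilde h$ in which strips advance by $s$, $\tilde h^n$ therefore moves it $ns+1$ strips. This displacement is a translation by $s'$ full turns, so $ns+1=s'm$, i.e.\ $s'm-sn=1$ (and $-1$ in the left-bump case). That is the Farey relation for $r(y)=s/m$ and $r(x)=s'/n$.
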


\begin{proof}
 Let $x\vee y$ be a simple pair, where $x$ has rotation number $p_1/q_1$, and $y$ has rotation number $p_2/q_2$. Without loss of generality, assume $x$ has larger period i.e. $q_1>q_2$. 

 Then there are $q_1$ rectangles in the complement of a graph as in Figure~\ref{fig:graph and image}, each containing a point of $x$, and $q_2$ out of those contain in addition a point of $y$. 

 Choose the point $y_0$ of the orbit $y$ that is within the "bump" in the right side in Figure~\ref{fig:graph and image}, and choose $x_0$ to be the point above $y_0$ in the same rectangle $R_0$. 

 Applying the homeomorphism $h$ moves $x_0$ and $y_0$ together to some other rectangle containing two points. After applying $h$ $q_2$ times, $y_0$ comes back to itself after circling the torus horizontally  $p_2$ times. The point $x_0$ shifts by $\frac{p_1}{q_1}$ rectangles every application of $h$, and after $q_2$ applications lands one point of the $x$ orbit to the right of $x_0$, in the case of Figure~\ref{fig:graph and image}, or to the left in the other case where the disk is on the left, while applying the homeomorphism $q_1$ times $x_0$ comes back to itself exactly.
 
 Therefore, \[
q_2\frac{p_1}{q_1}=p_2\pm\frac{1}{q_1},
 \]
 and $\frac{p_1}{q_1}$ and $\frac{p_2}{q_2}$ are Farey neighbors as required.
\end{proof}

\begin{definition}
    Let $x$ and $y$ be two classes of periodic orbits on a surface $S$. We say $x$ is forced by $y$ if any homeomorphism $h$ of $S$ containing $y$ as a periodic orbit (recall that this implies $S$ has a specific behavior  up to isotopy on $S\setminus y$) also contains $x$ as a periodic orbit. See \cite{boyland1994methods}) We denote this by $y\succeq x$.
\end{definition}

\begin{thm}[Pinsky-Wajnryb \cite{pinskyWajnryb}] 
Let $\frac{p}{q} \vee \frac{r}{s}$ be a simple pair for a homeomorphism $h$ of the two dimensional torus.  Then for any Farey neighbors $\frac{\alpha}{\beta}$ and $\frac{\gamma}{\delta}$ that are between $\frac{p}{q}$ and $\frac{r}{s}$, 
\[
\frac{p}{q} \vee \frac{r}{s} \succeq \frac{\alpha}{\beta}\vee\frac{\gamma}{\delta},\ \frac{p}{q} \vee \frac{r}{s} \succeq \frac{\alpha}{\beta}.
\]
Thus an edge in the Farey tree forces all its offspring and offspring neighbors.
\end{thm}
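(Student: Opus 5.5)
The plan follows the standard forcing strategy recalled in Section~\ref{sec:Pairs}. Take the mapping class of $h$ on $T^2\setminus(x\cup y)$, where $x\vee y=\frac{p}{q}\vee\frac{r}{s}$. First compute its Thurston--Nielsen canonical form. Then exhibit the required orbits inside the canonical form, and conclude by minimality (Boyland, Jiang): every periodic orbit of the canonical form, with its strong Nielsen type, persists in every homeomorphism in the isotopy class. Hence it is forced by $\frac{p}{q}\vee\frac{r}{s}$.

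Step one is to use the graph $G=\alpha\cup\bigcup\beta_i$ of Definition~\ref{def:simple_pair} and its image from Figure~\ref{fig:graph and image} to build an invariant train track. The action sends vertical loops to vertical loops, except $b$, which picks up a loop around an $x$-puncture, and sends $\alpha\mapsto T_{h(b)}(\alpha)$. I would write down the induced graph map and tighten it with the Bestvina--Handel algorithm to obtain an efficient representative. The transition matrix should then be Perron--Frobenius, so the class is pseudo-Anosov, as stated in the introduction. This also yields a Markov partition and symbolic dynamics, with the rectangles of $T^2\setminus G$ as the natural symbols.

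Step two is the inductive heart of the argument. Let $\frac{m}{n}=\frac{p}{q}\oplus\frac{r}{s}$ be the Farey child. I would show directly in the symbolic dynamics that the canonical form has a periodic orbit $z$ of rotation number $\frac{m}{n}$ such that both $x\vee z$ and $z\vee y$ are again simple pairs. The construction is to splice itineraries: $z$ follows $y$'s itinerary for $s$ steps and $x$'s itinerary for $q$ steps. This matches the arithmetic of the previous lemma, $q_2\frac{p_1}{q_1}=p_2\pm\frac{1}{q_1}$. One then checks that a spine of $T^2\setminus(x\cup z)$ of the form in Figure~\ref{fig:graph and image} exists, with $z$ playing the role of the larger orbit. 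For this I would place the $z$ points in the rectangles between the corresponding $x$ and $y$ points, and verify that the single ``bump'' reappears.

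Given step two, induction over the Farey diagram finishes the proof. Every pair of Farey neighbors strictly between $\frac{p}{q}$ and $\frac{r}{s}$ arises from the edge $\frac{p}{q}\frac{r}{s}$ by iterated children, as noted in Section~2. Forcing is transitive, because an orbit present in every map in the class of $x\vee y$ carries its own strong Nielsen type. So $\frac{p}{q}\vee\frac{r}{s}\succeq\frac{\alpha}{\beta}\vee\frac{\gamma}{\delta}$, and a fortiori $\succeq\frac{\alpha}{\beta}$.

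I expect the main obstacle to be the second half of step two. It is not enough to find an orbit of the right period and rotation number, since Example~\ref{ex:non simple orbit} shows rotation number does not determine the orbit. One must certify the \emph{strong Nielsen type}: the new orbit together with $x$ (or $y$) must genuinely admit the simple-pair spine. I would first attempt this by an explicit isotopy in the universal cover, tracking lifts of the three orbits under $\tilde h$. If that gets unwieldy, the fallback is to use the symmetry of the Claim and the conjugacy by $\begin{pmatrix}1&0\\1&1\end{pmatrix}$-type changes of framing. With these I would normalize to the base edge $\frac{0}{1}\vee\frac{1}{1}$ and do the check once, as an explicit braid computation.
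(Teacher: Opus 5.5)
The paper gives no proof of this theorem; it is quoted from Pinsky--Wajnryb, so your proposal has to stand on its own. As it stands it has a genuine gap.

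The outer shell is fine. Every Farey edge in $[\frac pq,\frac rs]$ descends from the edge $\frac pq\,\frac rs$. Forcing is transitive, since it is just an implication between properties of homeomorphisms. Minimality of the pseudo-Anosov representative on $T^2\setminus(x\cup y)$ transfers the braid type of $x\cup z$ to every map in the class.

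All the content, however, is in Step two, where you only describe what you would check, and the tools you name do not do the job. You have no train track for a general pair: the Perron--Frobenius property is asserted, not verified for arbitrary $q,s$, so there is no symbolic dynamics yet. Moreover, the components of $T^2\setminus G$ are not Markov rectangles. They are once- or twice-punctured disks containing the points of $x$ and $y$ themselves. Itineraries with respect to them are not isotopy invariant and are neither realized nor unique in any controlled way. They cannot certify a strong Nielsen type, which is exactly the point of the Example of a non-simple orbit of rotation number $\frac12$.

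The splicing ``$s$ steps along $y$, then $q$ steps along $x$'' is a sensible heuristic. But it must be carried out in a genuine train-track coding, where $x$ and $y$ are $1$-prong punctures. One must then compute from the code that $x\cup z$ and $z\cup y$ admit spines whose images are as in Figure~\ref{fig:graph and image}. That computation is the theorem, and it is missing.

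Your fallback cannot supply it, because the normalization it relies on does not exist. Consider any conjugacy of $h$: the $\pi$-rotation of the Claim, or a framing change $\begin{pmatrix}1&0\\k&1\end{pmatrix}$ (up to sign, these are the integral matrices commuting with the shear). It preserves the number $q+s$ of points in $x\cup y$. By the lemma that rotation numbers are well defined up to sign, it also preserves both rotation numbers up to a simultaneous sign. So the only identification available is $\frac pq\vee\frac rs\sim\frac{q-p}q\vee\frac{s-r}s$, and a pair on a $(q+s)$-punctured torus is never conjugate to $\frac01\vee\frac11$. The fallback conflates the $SL_2(\mathbb{Z})$ self-similarity of the Farey diagram, which carries the edge $\frac01\,\frac11$ to $\frac pq\,\frac rs$, with symmetries of the torus; no torus symmetry realizes it. A braid computation at the base edge therefore proves only the first inductive step, $\frac01\vee\frac11\succeq\frac01\vee\frac12,\ \frac12\vee\frac11$.

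What is needed is an argument uniform in $(q,s)$:
\begin{enumerate}
\item an explicit efficient representative for a general edge;
\item an explicit orbit $z$ of period $q+s$ of that representative;
\item an explicit isotopy rel $x\cup z$, and rel $z\cup y$, bringing the image of a spine to the form of Definition~\ref{def:simple_pair}.
\end{enumerate}
For the first item, the paper's fibered-cone description is the legitimate sense in which all pairs reduce to the base one: every simple-pair map is a Whitehead-link monodromy, and the $0\vee1$ track induces tracks on all fibers. If instead you produce $z$ for some convenient representative by tracking lifts in $\mathbb{R}^2$, you must also show that $z$ is unremovable, since only orbits of the canonical form are forced.
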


\begin{remark}
It follows in greater generality by Doeff \cite{DOEFF_1997} that if a shear homeomorphism contains orbit with two distinct rotation numbers, it contains a periodic orbit with any rational rotation number between them (but by Example \ref{ex:non simple orbit} the rotation number does not uniquely determine an orbit).
\end{remark}

\section{A fibered cone for the Whitehead link exterior}

In \cite{Thurston1986Norm}, Thurston proves that a rational interior point of any face on the Thurston norm ball of a three dimensional manifold corresponds to a fibration of the manifold. He computes the norm ball for the Whitehead link complement as an example, and this results in the ball $B$ shown in Figure~\ref{fig:Thurston_ball}.

\begin{figure}[!ht]
    \centering
    \begin{overpic}[width=5cm]{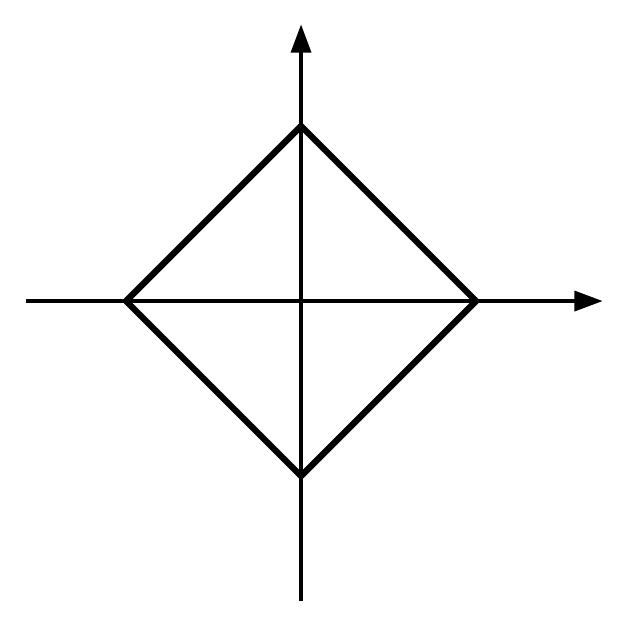}
    \put(97,46){$\lambda_1$}
    \put(50,96){$\lambda_2$}
    \put(56,40){$B$}
    \put(76,45){1}
    \put(43,80){1}
     \end{overpic}
    \caption{The Thurston norm ball for the Whitehead link complement.}
    \label{fig:Thurston_ball}
\end{figure}

The axes $\lambda_1$ and $\lambda_2$ in the figure represent the longitudes on the two components of the link. Thurston notes that it is easy to find a twice punctured disk with boundary say $\lambda_1$ and two meridional components on the other link component, by taking a disk bounded by the first component that the second component punctured twice as in Figure \ref{fig_whitehead}. Then, one may attach an annulus connecting the two meridional components and contained in the boundary of a small neighborhood of the second component, to obtain a once punctured torus with a single boundary $\lambda_1$, and similarly for $\lambda_2$.

\begin{figure}[ht!]
\centering
\includegraphics[height=1.5in]{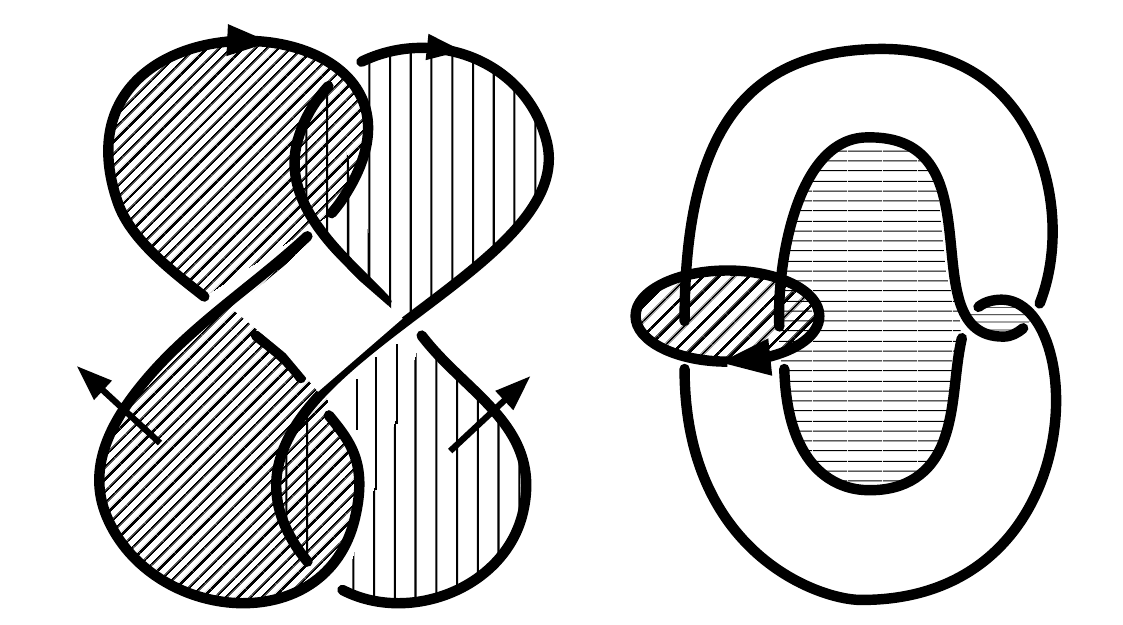}
\caption{Two diagrams of the Whitehead link. 
The norm minimizing surfaces $P_1$ and $P_2$ corresponding to $\mathfrak{a}$ and $\mathfrak{b}$, each a thrice punctured sphere, in two representations of the Whitehead link.}
\label{fig_whitehead}
\end{figure}

The link has symmetries fixing one component while changing the orientation of the other, thus to obtain all monodromies it is sufficient to consider positive integers $n$ and $m$.
Let $\mathscr{C}$ be the fibered cone corresponding to the positive quadrant in the homology space of the Whitehead link exterior. 
Each element $\alpha \in \mathscr{C}$ is represented by $\alpha= x \lambda_1 + y \lambda_2$ 
for some $x, y \in {\R}_{\ge 0}$. 
We write $\alpha= (x,y)$ in this case. 
The Thurston norm $\|\alpha\|$ is equal to $ x+y$ as $\chi( x \lambda_1 + y \lambda_2)=x\cdot\chi(\lambda_1)+y\cdot\chi(\lambda_2)=-(x+y)$.

Thus the face is a straight diagonal segment, and a rational point in an interior of the face is a rescaling of $n\lambda_1+m\lambda_2$, with $n,m$ positive, nonzero, and can be assumed to be relatively prime. A fiber corresponding to this fibration is obtained by taking the Haken sum of $n$ copies of the once punctured torus with boundary $\lambda_1$ and $m$ copies of the torus with boundary $\lambda_2$. 
Thus, if $\alpha= (k, \ell) \in \mathscr{C}$ is a primitive integral class, then 
the fiber $S_{\alpha}$ is homeomorphic to $\Sigma_{1, k+ \ell}$, a torus with $k+\ell$ punctures.

By the symmetry of the Whitehead link interchanging the two components, 
the inverse $\Phi_{(k,\ell)}^{-1}$ of the monodromy $\Phi_{(k,\ell)}$ of the fibration on $W$ associated to $(k,\ell) \in \mathscr{C}$ 
is conjugate to the monodromy $\Phi_{(\ell, k)}$ of the fibration on $W$ associated to $(\ell,k) \in \mathscr{C}$.

\section{The Proof}
In this section we prove our main theorem, namely that there is a one to one correspondence between the set of all simple pairs and the set of all monodromies of fibrations of the Whitehead link complements. We start by showing an inclusion in one direction.

\begin{prop}[Rolfsen \cite{Rolfsen}]\label{prop:The 1-1 case}
    The monodromy corresponding to the fibration $\lambda_1+\lambda_2$ is the homeomorphism given by the simple pair $\frac{0}{1}\vee\frac{1}{1}$.
\end{prop}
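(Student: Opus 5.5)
The plan is to identify the fiber of the class $\lambda_1+\lambda_2$ concretely, write down its monodromy as a product of Dehn twists, and match that product with the mapping class on $T^2\setminus(x\cup y)$ determined by the simple pair $\frac{0}{1}\vee\frac{1}{1}$.

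\textbf{The fiber.} By Section 4, the fiber $S$ of $\lambda_1+\lambda_2$ is the Haken sum of the two once-punctured tori with boundaries $\lambda_1$ and $\lambda_2$. It is homeomorphic to $\Sigma_{1,2}$, a torus with two punctures, one for each link component. On the dynamical side, the simple pair $\frac{0}{1}\vee\frac{1}{1}$ consists of two fixed points: $x$ with rotation number $0$ and $y$ with rotation number $1$. So $T^2\setminus(x\cup y)$ is also a twice-punctured torus. By Definition \ref{def:simple_pair} with $n=m=1$, its spine is one horizontal loop $\alpha$ and one vertical loop $\beta$ meeting once. The single complementary rectangle contains both punctures. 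The map $h$ sends $\beta$ to $\beta$ plus the boundary of a disk around the $x$ puncture, and sends $\alpha$ to $T_{h(\beta)}(\alpha)$.

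\textbf{The algebraic monodromy.} Next I write down the monodromy $h$ of the pair explicitly, as a composition of a Dehn twist about a vertical loop with a point push of $x$ around $\beta$. Equivalently, $h$ is a product of twists $T_{\beta'}T_{\beta}^{-1}$ (or similar), where $\beta,\beta'$ are the two vertical curves separating the punctures. I then compute its action on $\pi_1(S)$, a free group of rank $3$ generated by $\alpha$, $\beta$ and a loop $c$ around the $x$ puncture.

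\textbf{The geometric monodromy.} In parallel, I compute the monodromy of the Whitehead link fibration for the class $(1,1)$. The most direct route is Rolfsen's: present the Whitehead link as the closure of a braid, or as a plumbing/Stallings twist of a Hopf band with a figure-eight-type fibered surface. The complement then fibers with fiber $\Sigma_{1,2}$, and the monodromy is a product of twists along curves on the Murasugi-sum pieces. I would check that this product equals $h$ up to conjugation.

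\textbf{Alternative check, which I expect to be the main obstacle.} Carrying out the identification cleanly is the hardest step, and I would back it up with a second argument. Compute the mapping torus $M_h$ of $h$ directly by a Wirtinger-type presentation: $\pi_1(M_h)=\langle \alpha,\beta,c,t \mid t g t^{-1}=h_*(g)\rangle$. Then show that this group, together with its two cusp subgroups, matches the Whitehead link group. The introduction already asserts that ``a direct computation shows'' $M_h$ is the Whitehead link exterior. Since hyperbolic link complements are determined by their groups with peripheral structure (Mostow rigidity together with Waldhausen), this identifies $M_h$ with $W$.

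\textbf{Identifying the cohomology class.} Finally, the fibration of $M_h$ by copies of $S$ defines a class in $H^1(W)$. I need to show it is $\lambda_1+\lambda_2$ rather than some other class. The fiber $S$ meets each cusp in exactly one boundary circle, and it has Euler characteristic $-2$. So its class $(a,b)$ has $a,b\in\{\pm1\}$ with $|a|+|b|=2$. Using the symmetries of the link that reverse a component, I can then assume $(a,b)=(1,1)$, which completes the identification.
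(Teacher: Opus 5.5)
Your explicit model of $h$ is wrong, and the rest of the plan is built on it. A point push of $x$ around the vertical loop $\beta$ is $T_{\beta'}T_\beta^{-1}$ (or its inverse), and it is supported in a vertical annulus disjoint from $\beta$. Composing it with $T_\beta$ therefore gives the multitwist $T_{\beta'}$ or $T_\beta^{2}T_{\beta'}^{-1}$. Such a map fixes $\beta$. This fails in three ways:
\begin{itemize}
\item It contradicts the requirement that $h(\beta)$ be $\beta$ plus a loop around $x$, which is a curve not isotopic to $\beta$ in $T^2\setminus(x\cup y)$.
\item It is reducible, whereas every monodromy of the hyperbolic manifold $W$ is pseudo-Anosov.
\item $T_{\beta'}T_\beta^{-1}$ by itself is isotopic to the identity on the closed torus, so it is not even of shear type.
\end{itemize}
With this $h$, both of your comparisons necessarily fail. $M_h$ would contain the non-peripheral incompressible torus swept out by $\beta$, so $\pi_1(M_h)$ cannot be the Whitehead link group with its peripheral structure.

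The push must go across $\beta$, not along it. Let $a^{\pm}$ be the boundary curves of a thin annulus around a horizontal loop through $x$. Then $P=T_{a^+}T_{a^-}^{-1}$ (suitably oriented) satisfies $P(\beta)=\beta'$ and $P(\alpha)=\alpha$. Hence $h=P\circ T_\beta$ gives $h(\beta)=\beta'$ and $h(\alpha)=T_{\beta'}(\alpha)$, which is what the definition of $\frac{0}{1}\vee\frac{1}{1}$ demands.

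Even with this fixed, the actual content of the proposition is only announced (``I would check'') in both of your routes. That content is that the Whitehead monodromy is conjugate to $h$.
\begin{itemize}
\item Citing the introduction's ``direct computation'' is circular, since in the paper that sentence is backed precisely by this proposition.
\item The group-theoretic route needs an explicit isomorphism of presentations carrying cusp subgroups to cusp subgroups. That is where all the work lies, and you have not produced it.
\item The Murasugi-sum route is legitimate (the paper's remark points to Appendix B of Kin--Rolfsen for it), but the twist-product comparison still has to be carried out.
\end{itemize}

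The paper avoids both a twist formula and any rigidity theorem. Following Goldsmith, it realizes $W$ as a branched double cover of a simpler symmetric link whose punctured-disk fibration has an explicit braid monodromy. It lifts the arcs $e_1,e_2,e_3$ to curves $a,b,c$ on the twice-punctured torus. It then reads off directly that $f(b)$ is $b$ plus a loop around one puncture and that $f(c)$ is $c$ twisted along $f(b)$, which are exactly the defining conditions of the simple pair.

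Your final step is sound. You force the class to be $(\pm1,\pm1)$ from $\chi(S)=-2$ and one boundary circle per cusp, then apply the component-reversing symmetries. This makes explicit a point the paper leaves implicit.
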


\begin{remark}
The above monodromy can also be computed in a number of different methods. In Appendix B of \cite{KinRolfsen2018}, the monodromy is computed using Murasugi sums, and it can also be computed from the monodromy  of an appropriate fibration of the magic manifold given in \cite{Kin2015Magic} by Dehn filling.
\end{remark}

\begin{proof}
We take an approach due to Goldsmith \cite{Goldsmith1975Symmetric}, used in page 336 in Rolfsen's book \cite{Rolfsen} and also in Birman and Williams \cite{BirmanWilliamsII}. First, notice that the Whitehead link is a double cover of one component of the link showed on the left of Figure~\ref{fig:disks}, branched along the other component (which is trivial, and depicted as a straight line. This proves it is fibered as the link is symmetric. We expand on the details for completeness.

Depicting the link when the $K$ component is arranged along the $z$ axis as in Figure~\ref{fig:disks} (viewing the projection onto the $xy$ plane from above), shows one component intersects a disk bounded by the other component 4 times. Cutting along this disk to produce the branched cover results in the Whitehead link shown on the right side of the figure.

\begin{figure}[ht]
    \centering
    \includegraphics[width=4cm]{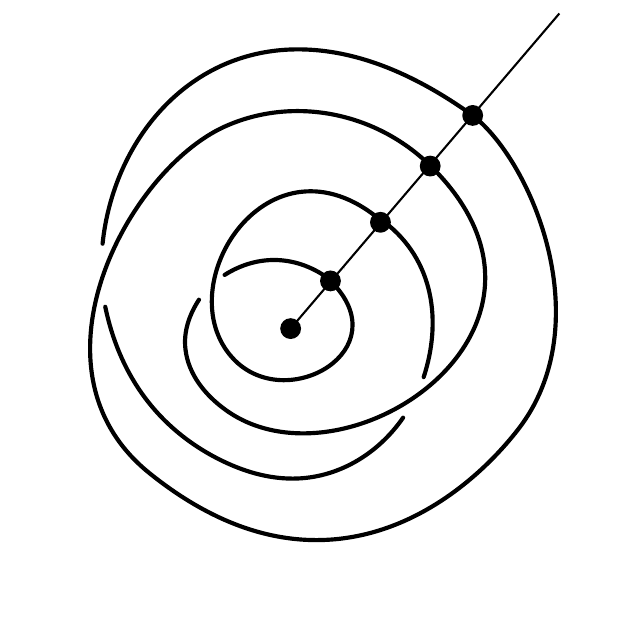}
    \includegraphics[width=4cm]{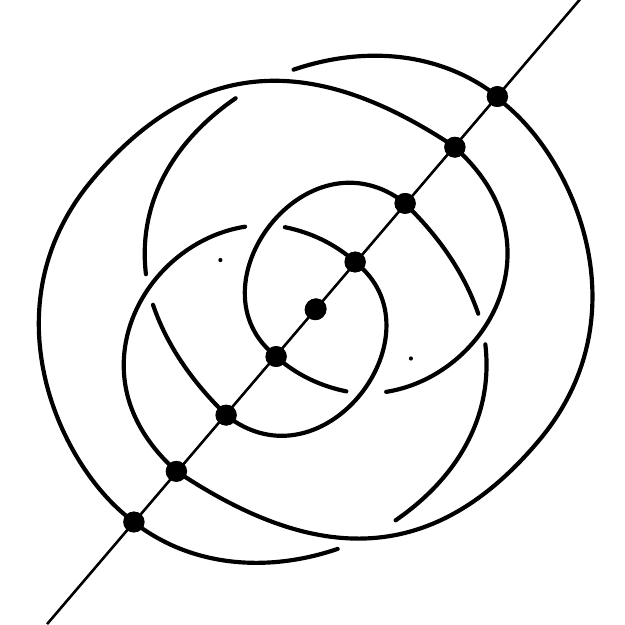}
    \caption{Taking the branched cover along one component takes the second component to the Whitehead link as shown on the right.}
    \label{fig:disks}
\end{figure}

The trivial link on the left of Figure~\ref{fig:disks} is symmetric, thus the monodromy computed for one component (with the other as the braid axis) is identical to the one computed for the other component.

The monodromy corresponding to the one component is easily computed opening up the other component to a braid shown in Figure~\ref{fig:braid monodromy}. The braid allows one to determine the action of the monodromy on a graph connecting the punctures, which is a retract of the punctured disk (see \cite{boyland1994methods}).
This yields that $\phi(a)$ covers $a$ twice and each of $b$ and $c$, $\phi(b)$ covers each of $a,b,c$ once, and $\phi(c)$ covers $a$ and $b$.

\begin{figure}[ht]
    \centering
    \begin{overpic}[width=4cm]{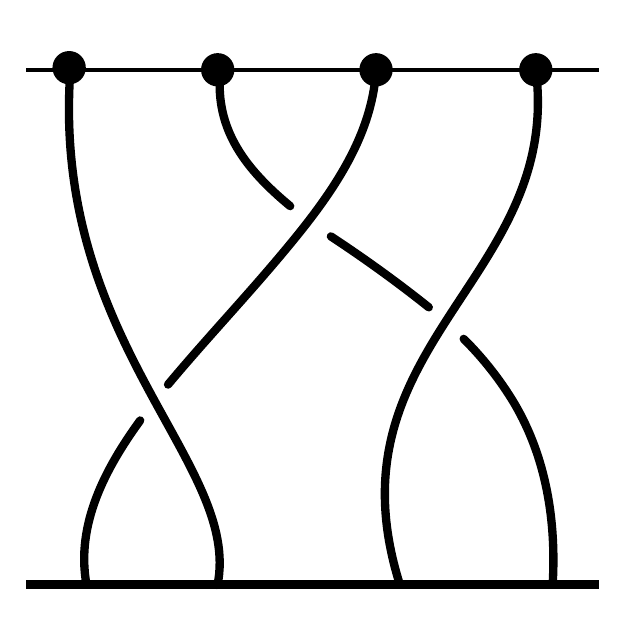}
    \put(20,95){$e_1$}
    \put(45,95){$e_2$}
    \put(75,95){$e_3$}
        \end{overpic}
        \begin{overpic}[width=5cm]{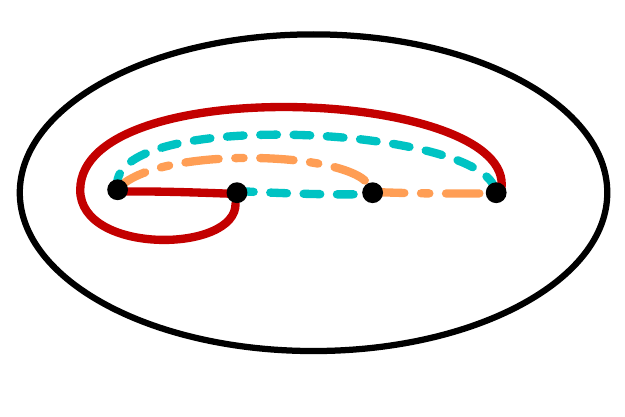}
    \put(25,20){$e_1$}
    \put(46,20){$e_2$}
    \put(67,20){$e_3$}
        \end{overpic}      
    \caption{The braid corresponding to the arcs of $K$ in the complement of a disk bounded by $J$, the segments connecting them inducing the monodromy on the punctured disk, depicted on the right.}
    \label{fig:braid monodromy}
\end{figure}

When looking at the branch cover in Figure~\ref{fig:disks}, the four disks connected by three twisted bands on the left end up as four disks connected by 6 bands. We can simplify by cut and paste, and this results in the twice punctured torus as depicted in Figure~\ref{fig:1-1 monodromy}.

\begin{figure}[ht!]
    \centering
    \begin{overpic}[width=2.6cm]{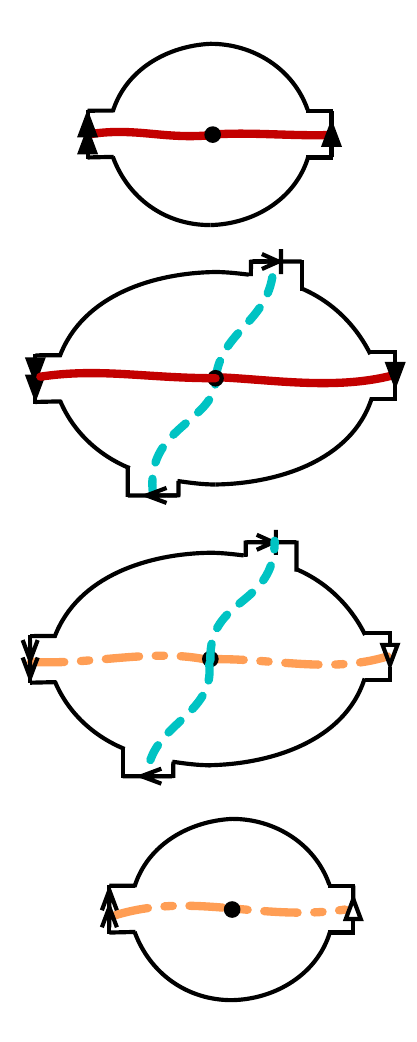}
    \put(18,82){$e_1$}
    \put(19,42){$e_2$}
    \put(23,15){$e_3$}
        \end{overpic}
         \begin{overpic}[width=10cm]{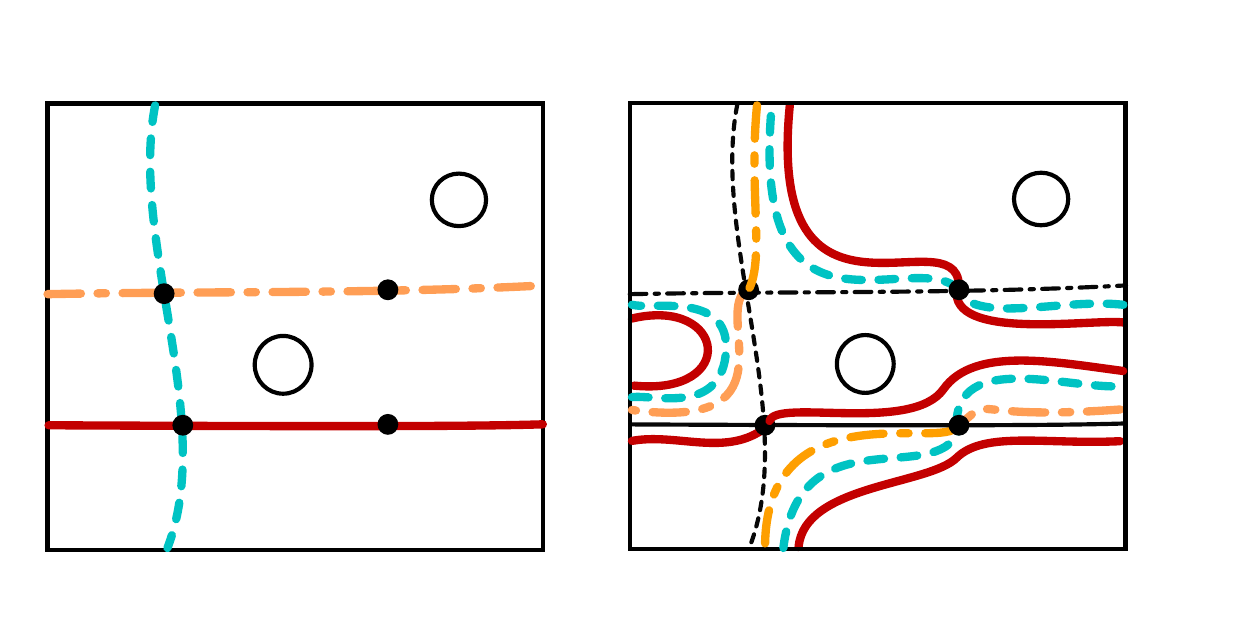}
    \put(25,13){$a$}
    \put(15,35){$b$}
    \put(25,28){$c$}
    \put(56,29){$p$}
    \put(68,20){$x$}
    \put(82.3,33.3){$y$}
        \end{overpic}
    \caption{The twice punctured torus as on the right of Figure~\ref{fig:disks}, branch covering the disk on the left of figure \ref{fig:disks}. Keeping track of the simple closed curves $a,b,c$ double covering the edges $e_1, e_2 ,e_3$ of the graph respectively, one obtains the monodromy induced on the twice punctured torus.}
    \label{fig:1-1 monodromy}
\end{figure}

The monodromy of the Whitehead link is then up to isotopy given by the image of the graph created by the closed curves $a,b$ and $c$ which each cover twice the edges connecting the branch-points in the disk. The resulting monodromy is shown in Figure~\ref{fig:1-1 monodromy}. By considering only the simple closed curves $b$ and $c$ we see that the homeomorphism $f$ of the twice punctured torus is of shear type, taking the vertical loop $b$ to itself plus a loop on its right around one of the boundary components denoted by $x$, and, as seen by sliding the point $p$ to the left, taking $c$ to itself after a Dehn twist along $f(b)$. Furthermore, the three curves $a, b$ and $c$ bound a disk containing only the boundary component $x$ and there is a unique arc up to isotopy connecting it to the loop $c$, and similarly the image of this arc is contained in the image of this once punctured disk and thus connects $c$ to the boundary component $x$ trivially.  Therefore, the resulting monodromy is that of the simple pair $\frac{0}{1}\vee \frac{1}{1}$

\begin{figure}[ht!]
    \centering
    \begin{overpic}[width=12cm]{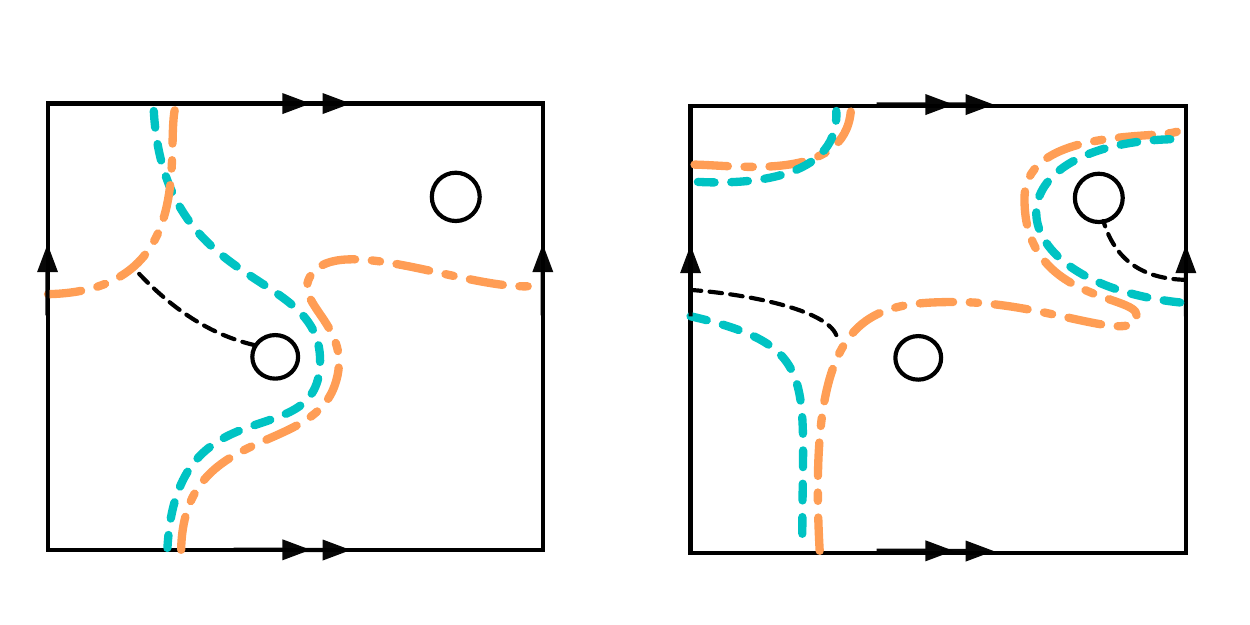}
    \put(18,0){Form $I$}
    \put(70,0){From $II$}
    \put(9,13){$f(b)$}
    \put(30,25){$f(c)$}
       \put(57.5,17){$f(b)$}
    \put(78,22){$f(c)$}
        \end{overpic}
    \caption{The two forms of the monodromy of the simple pair $\frac{0}{1}\vee \frac{1}{1}$. These forms are isotopic, pulling the intersection points of $f(b)$ and $f(c)$ and the top/bottom boundary of the fundamental domain. The dashed arcs are there to signal which of the two orbits is considered as the $x$ orbit in each case.}\label{fig:1-1 symmetry}
\end{figure}
\end{proof}

\begin{remark}
The simple pair $\frac{0}{1}\vee \frac{1}{1}$ is symmetric, as both $x$ and $y$ can serve as the shorter orbit in Definition~\ref{def:simple_pair}. In Figure~\ref{fig:1-1 symmetry}, both isotopic forms of the $\frac{0}{1}\vee \frac{1}{1}$ are shown, one corresponding to the first form of a simple pair when the once punctured disk is on the right of a vertical loop of the graph, and one corresponding to the form where it is on the left of a vertical loop.
\end{remark}

\begin{thm}\label{thm: maonodromies are simple pairs}
Any monodromy arising from a fibration of the Whitehead link exterior corresponds to a simple pair.
\end{thm}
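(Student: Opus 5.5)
We prove the statement by induction along the Farey tree, using Proposition~\ref{prop:The 1-1 case} as the base case. By the symmetries discussed in the previous section, it suffices to treat primitive classes $(k,\ell)\in\mathscr{C}$ with $k,\ell>0$. The fiber $S_{(k,\ell)}$ is the torus with $k+\ell$ punctures. The claim we aim for is that the monodromy $\Phi_{(k,\ell)}$ is the homeomorphism determined by the simple pair $\frac{p}{q}\vee\frac{r}{s}$. Here $\frac{p}{q},\frac{r}{s}$ are the two Farey parents of the rational number associated with $(k,\ell)$, with the ratio $\ell/(k+\ell)$ as the natural guess, normalized so that the numbers of punctures coming from the two orbits are $q$ and $s$ with $q+s=k+\ell$. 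As a sanity check, $(1,1)$ gives the child $\frac12$ of $\frac01,\frac11$, which matches Proposition~\ref{prop:The 1-1 case}.

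The main tool for the inductive step is Fried's description of the fibered cone. All fibrations in $\mathscr{C}$ are cross sections of a single suspension flow, namely the suspension of the pseudo-Anosov monodromy of $(1,1)$. A class $(k,\ell)$ with Farey parents corresponding to $(k_1,\ell_1)$ and $(k_2,\ell_2)$ satisfies $(k,\ell)=(k_1,\ell_1)+(k_2,\ell_2)$, since Farey addition is vector addition on numerators and denominators. The fiber $S_{(k,\ell)}$ is then the oriented cut-and-paste (Haken) sum of $S_{(k_1,\ell_1)}$ and $S_{(k_2,\ell_2)}$. Concretely, I would work in the mapping torus of a simple-pair homeomorphism $h$ on $T^2$ punctured at $x\cup y$. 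One section is the punctured torus itself, and another section is obtained from an annulus-type surface transverse to the flow, built from the invariant vertical curves. The first return map to their Haken sum should again preserve a graph of the form in Figure~\ref{fig:graph and image}, consisting of one horizontal loop and $q+s$ vertical loops. In this new graph the rectangles of the two old orbits are interleaved, and the single ``bump'' punctured disk of Definition~\ref{def:simple_pair} is carried along.

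So the key steps are as follows. First, show by the argument of Proposition~\ref{prop:The 1-1 case} that the fibration of $(1,1)$ is the simple pair $\frac01\vee\frac11$. Second, describe the suspension flow near the vertical loops and identify the cohomology classes dual to the two cusps with the two orbits $x,y$. Third, compute the return map on the Haken sum and verify the two conditions of Definition~\ref{def:simple_pair}: vertical loops go to vertical loops except one, which picks up a once-punctured disk, and the horizontal loop is sent to $T_{h(b)}(\alpha)$. Fourth, read off the rotation numbers as in the Farey-neighbor lemma, obtaining the two children $\frac{p+r}{q+s}$ paired with each parent.

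I expect the third step to be the main obstacle. The difficulty is controlling the horizontal loop under the new first return map, and checking that exactly one once-punctured disk appears rather than several. I would first try to carry this out with an explicit train track for the $(1,1)$ monodromy, tracking the flow lines of branches across the two surfaces. Failing that, I would verify the claim in the universal cover, using the shear lift $\tilde h$ and counting the horizontal displacements.
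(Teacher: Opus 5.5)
Your inductive step cannot get off the ground. Under your correspondence $(k,\ell)\mapsto \ell/(k+\ell)$, the parents of $(2,1)$ are $(1,1)$ and $(1,0)$. In general every class $(k,1)$ or $(1,\ell)$ has $(1,0)$ or $(0,1)$ as a parent, and in fact $(k,1)$ is not the sum of two integral classes of the open cone at all. But $(1,0)=\lambda_1$ and $(0,1)=\lambda_2$ are vertices of the norm ball, not interior points of the fibered face, so they are not fibered classes. Concretely, $\lambda_1$ has zero boundary on the second cusp, whereas a fiber meets every cusp torus in coherently oriented circles. So there is no parent fiber, no parent monodromy, and no induction hypothesis to invoke. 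Since every class other than $(1,1)$ lies below $(2,1)$ or $(1,2)$ in the tree, the scheme ``Haken sum of the two parent fibers'' proves nothing beyond the base case.

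Your own step four points the same way. The children $\frac01\vee\frac12$ and $\frac12\vee\frac11$ of $\frac01\vee\frac11$ have orbits of periods $1$ and $2$, so (see below) they are the classes $(1,1)+\lambda_1$ and $(1,1)+\lambda_2$. The ``annulus-type section'' you propose cannot fill this gap, for three reasons. A section in class $\lambda_1$ would make $\lambda_1$ fibered. The norm $\|\lambda_1\|=1$ excludes annuli. And a surface swept out by the vertical curves is tangent to the flow, not transverse to it.

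What that sweep actually produces is the paper's key ingredient. Since $f(b)\neq b$, the annulus traced by $b$ does not close up. Capping it with the once-punctured region of $T$ between $b$ and $f(b)$ gives a once-punctured torus, essentially the paper's $\lambda_1$; the other complementary region gives $\lambda_2$. The paper then uses $\lambda_1+\lambda_2=T$ to write $\lambda_{k,\ell}=(k-\ell)\lambda_1+\ell T$ or $kT+(\ell-k)\lambda_2$, and reads off the return map piece by piece. Every rectangle flows to the one directly above it, and only the topmost piece passes through $f$. This also settles your step-three worry: exactly one piece crosses the gluing, so exactly one once-punctured disk appears and the horizontal loop picks up a single twist along $f(l)$.

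Separately, the identification you are trying to propagate is false, so the induction could not close even with the right surfaces. The punctures of $S_{(k,\ell)}$ on a given cusp form a single orbit, because the cusp torus is connected. There are $k$ of them on the first cusp and $\ell$ on the second, so the two orbits have periods $k$ and $\ell$. The Farey parents of $\ell/(k+\ell)$ generally do not have denominators $k$ and $\ell$. For $(5,2)$ you get $\frac14\vee\frac13$, with periods $4$ and $3$ instead of $5$ and $2$. The correct target, as in the paper's converse theorem, is a Farey pair whose denominators are exactly $k$ and $\ell$.
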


\begin{proof}
The fact that we now know by Proposition~\ref{prop:The 1-1 case} one monodromy of the Whitehead link complement allows us to depict this manifold as a mapping torus over the twice punctured torus as in Figure~\ref{fig:suspension}.

\begin{figure}[!ht]
    \centering
    \begin{overpic}[width=8cm]{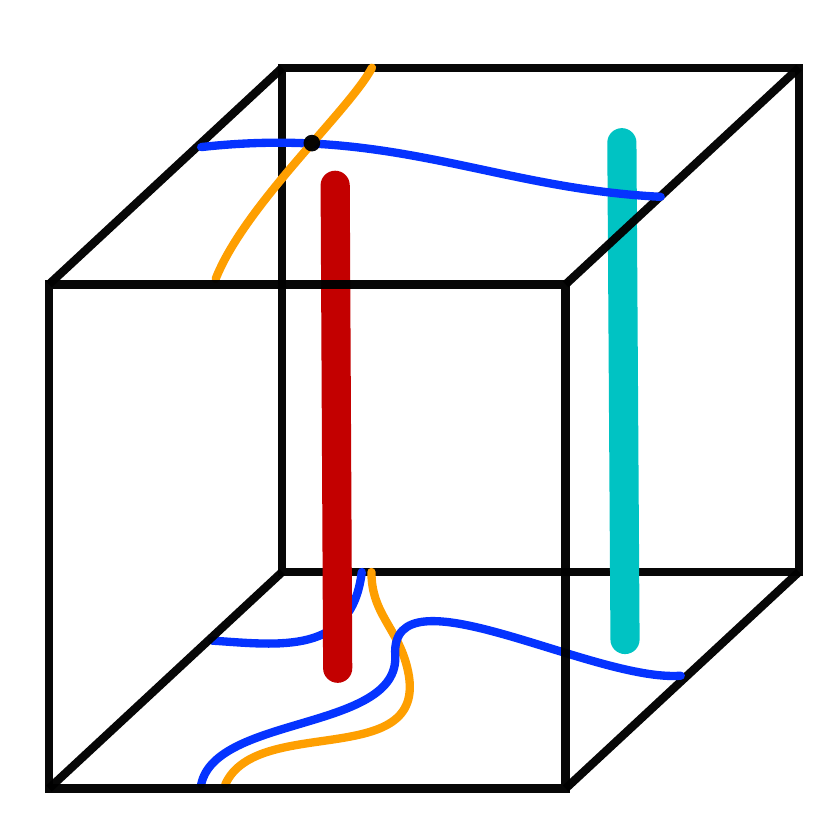}
    \put(59,81){$a$}
    \put(44,86){$b$}
    \put(55,18){$f(a)$}
    \put(45,8){$f(b)$}
        \end{overpic}
    \caption{The Whithead link complement as a mapping torus, where the top is glued to the bottom by the monodromy found in Proposition~\ref{prop:The 1-1 case}, the curve $a$ glued to $f(a)$ and the curve $b$ glued to $f(b)$. The front face is identified with the back face and the right face with the left face, as induced by the identifications on the edges of the square in Figure~\ref{fig:1-1 monodromy}.}
    \label{fig:suspension}
\end{figure}

This representation allows as to identify two once punctured tori embedded in the Whitehead link complement, depicted in Figure \ref{fig:two_surfaces}: 
\begin{enumerate}
    \item A torus $\lambda_1$ with a single puncture on one of the orbits, a front edge that is glued to its back edge, and an edge on the top of the cube along the edge $b$, that is glued to its bottom edge that is along $f(b)$.
    
    \item Similarly, a torus $\lambda_2$ with a single puncture on the second orbit, that also has top edge on $b$ and bottom edge on $f(b)$.
\end{enumerate} 

It follows that these are the same surfaces obtained by adding to each a tube to each of the thrice punctured spheres $P_1$ and $P_2$ depicted in Figure~\ref{fig_whitehead}.

\begin{figure}[ht!]
    \centering
    \begin{overpic}[width=6cm]{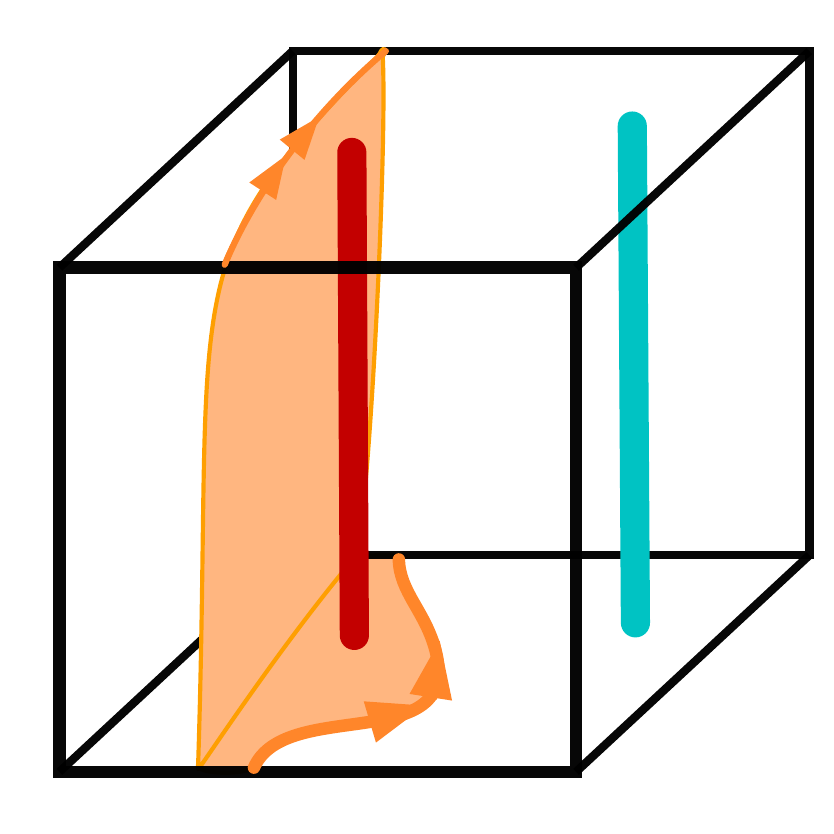}
    \put(26,75){$b$}
    \put(53,13){$f(b)$}
    \end{overpic}
        \begin{overpic}[width=6cm]{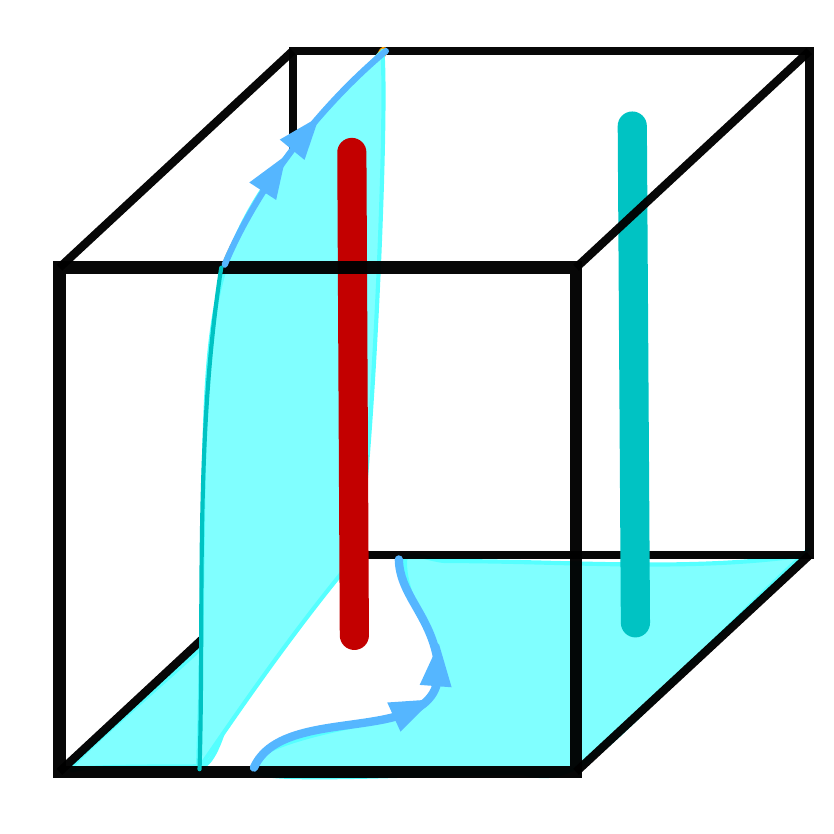}
    \put(26,75){$b$}
    \put(53,13){$f(b)$}
    \end{overpic}
    \caption{The two punctured tori, $\lambda_1$ and $\lambda_2$.}
    \label{fig:two_surfaces}
\end{figure}

Thus, it follows from \cite{Thurston1986Norm} that the integral second homology of the Whitehead link complement is generated over $\Z$ by $\lambda_1$ and $\lambda_2$, and for any two relatively prime numbers $k$ and $\ell$, $k,\ell \neq0$, the vector $(k,\ell)$ intersects the positive face of the Thurston norm ball in an internal rational point. Therefore, there is a fibration of the Whitehead link complement with a fiber given by the Haken sum $k\lambda_1+\ell\lambda_2$.

To understand the surface
$\lambda_{k,\ell}=k\lambda_1+\ell\lambda_2$ 
and its monodromy, first note that $\lambda_1$ and $\lambda_2$ intersect along their $b\sim f(b)$ edge. Orienting both so that the normal on the flat pieces in figure \ref{fig:two_surfaces} points upwards, one can directly compute that the vertical parts cancel out and the resulting surface $\lambda_{1,1}$ is the twice punctured torus $T$ that is the bottom horizontal face of the fundamental domain in Figure \ref{fig:suspension}.
Thus, If $k>\ell$ then $\lambda_{k,\ell}=(k-\ell)\lambda_1 + \ell T$,
and if $k<\ell$, $\lambda_{k,\ell}=k T + (\ell-k)\lambda_2$.
Thus each of the surfaces that arise as a fiber of some fibration can be seen as taking a few copies of $T$ and adding a few copies of either $\lambda_1$ or $\lambda_2$.
Figure~\ref{fig:schematic} shows the situation schematically.

\begin{figure}[ht!]
    \centering
    \begin{overpic}[width=6cm]{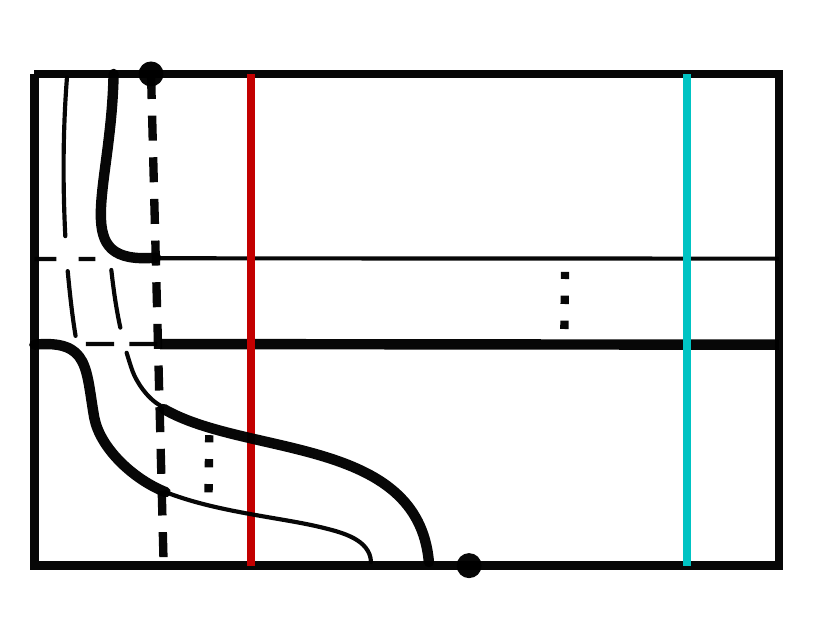}
    \put(14,70){$b$}
    \put(55,11){$f(b)$}
    \put(38,6){$\underbrace{}_{(k-\ell)\lambda_1}$}
    \put(-10,36){$\ell T \left\{\right.$}
        \end{overpic}
            \begin{overpic}[width=6cm]{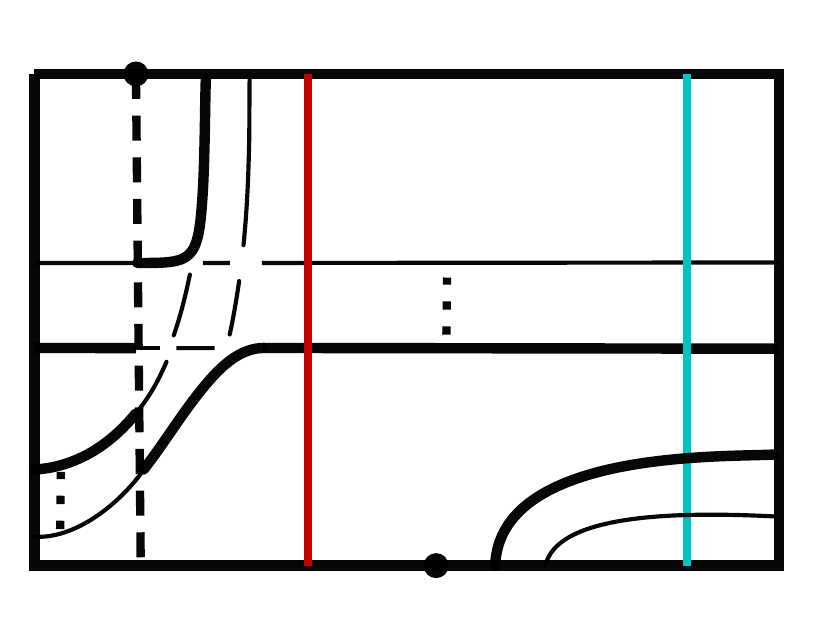}
   \put(14,70){$b$}
    \put(47,11){$f(b)$}
    \put(55,6){$\underbrace{}_{(\ell-k)\lambda_2}$}
    \put(94,36){$\left.\right\}kT $}
        \end{overpic}       
    \caption{The surface $\lambda_{k,\ell}=k\lambda_1+\ell\lambda_2$,  on the right in the case $p>q$, and on the left the case $q>p$. This is a side view into the cube in Figure~\ref{fig:suspension}. The vertical dashed line is the vertical plane through the $b$ vertical edge edge $b$ on the twice punctured torus in the bottom or top of the cube. The point $f(b)$ represents the image of this curve by the diffeomorphism $f$ gluing the top of the cube to the bottom.}
    \label{fig:schematic}
\end{figure}

\begin{figure}[ht!]
    \centering
    \begin{overpic}[width=6cm]{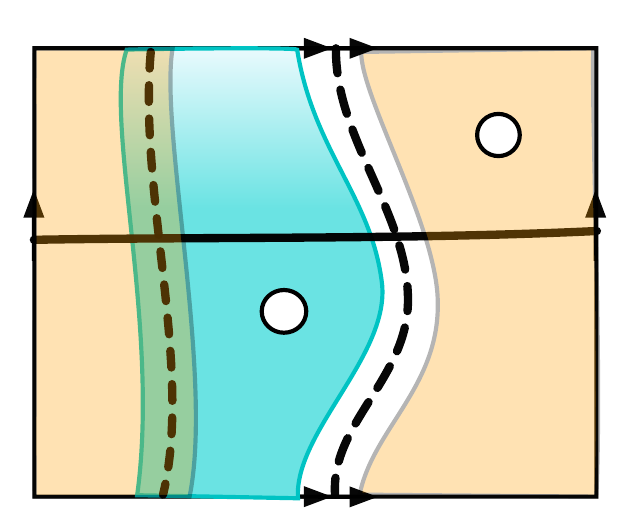}
    \put(38,56){$\lambda_1$}
    \put(80,20){$\lambda_2$}
        \end{overpic}
    \caption{The twice punctured torus $T$ and the two subsurface corresponding to $\lambda_1$ and $\lambda_2$.}
    \label{fig:subsurfaces}
\end{figure}

The horizontal and vertical edges on $T$ induce a horizontal and vertical edge on each of the surfaces by flowing them upwards through the manifold in its decomposition into a mapping torus, until they reach the surfaces as in Figure~\ref{fig:two_surfaces}. The resulting pieces are depicted in Figure \ref{fig:pieces}

\begin{figure}[ht!]
    \centering
    \begin{overpic}[width=11cm]{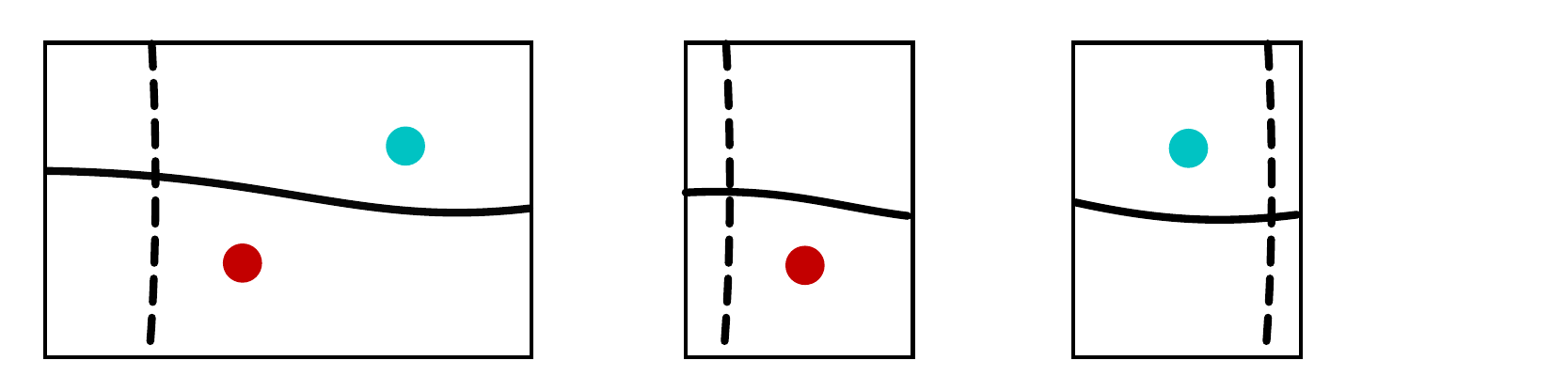}
    \put(20,-2){$T$}
    \put(51,-2){$\lambda_1$}
    \put(75,-2){$\lambda_2$}
        \end{overpic}
    \caption{The edges induces on the pieces $T,\lambda_1,\lambda_2$ making up any $\lambda_{k,\ell}$ surface.}
    \label{fig:pieces}
\end{figure}

When we perform the Haken sum and glue the $T,\lambda_1,\lambda_2$ pieces to a single surface, the horizontal edges connect to a single horizontal simple closed curve winding through all the pieces, while the vertical edges on each of the pieces remain separate simple closed curve. 

\begin{figure}[ht!]
    \centering
    \begin{overpic}[width=15cm]{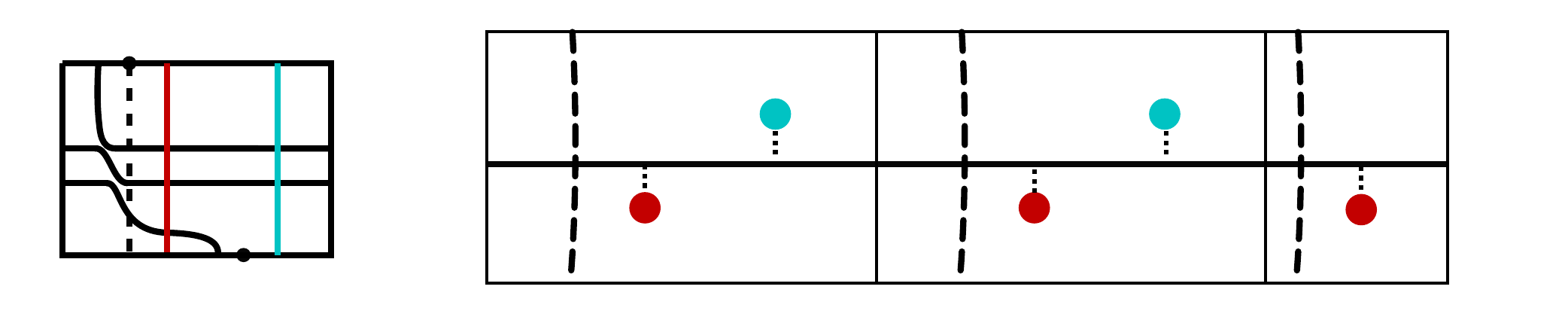}
    \put(12,1){$R_3$}
    \put(22,10.5){$R_2$}
    \put(22,7){$R_1$}
    \put(43,19){$R_1$}
    \put(68,19){$R_2$}
    \put(86,19){$R_3$}
    \end{overpic}
    \caption{The surface $2T+\lambda_1=3\lambda_1+2\lambda_2=\lambda_{2,3}$. It was proven in \cite{pinskyWajnryb} that one can connect the horizontal lines to peripheral loops encirceling the orbits, so that they are mapped straightforwardly to each other without winding around the other periodic point in the rectangle.}
    \label{fig:puzzle}
\end{figure}

The monodromy on this surface is obtained by sliding the surface upwards through the manifold represented as in Figure~\ref{fig:suspension} until it returns to itself. Sliding upwards induces the following action on the edges:

\begin{itemize}

    \item Each vertical edge in a rectangle to the vertical edge above it along the dashed line in Figure~\ref{fig:schematic}, except the topmost vertical edge $l$ (that is in the rectangle marked $R_{top}$ in bold on the top of the figure) that contains a single orbit, that is mapped to a closed curve $f(l)$ in the bottommost rectangle ($R_{bottom}$ in the figure) that separates the two points belonging to different orbits there. This curve was not previously a vertical edge but is equal to the vertical edge on its right or left together with a small loop around one of the points of the periodic orbits there. The loop is on the right if $k>l$ in which case a $\lambda_1$ piece appears bottommost in the the embedding of $k\lambda_1+\ell\lambda_2$ into the mapping torus (as in Figure~\ref{fig:schematic}) and the image of the monodromy in this $\lambda_1$ piece is given by  the left side on Figure~\ref{fig:monodromy for bottom piece}. The loop is on the left if $\ell>k$ and there is a bottommost $\lambda_2$ piece, on which the image of the monodromy is given in the right side of Figure~\ref{fig:monodromy for bottom piece}. 

    \item Similarly, the horizontal segment in each edge is mapped to the horizontal edge above it in another rectangle composing the surface, except the topmost horizontal segment that goes to the horizontal segment in the bottommost rectangle with a Dehn twist along $f(l)$.
    
\end{itemize}

\begin{figure}[ht!]
    \centering
    \begin{overpic}[width=8cm]{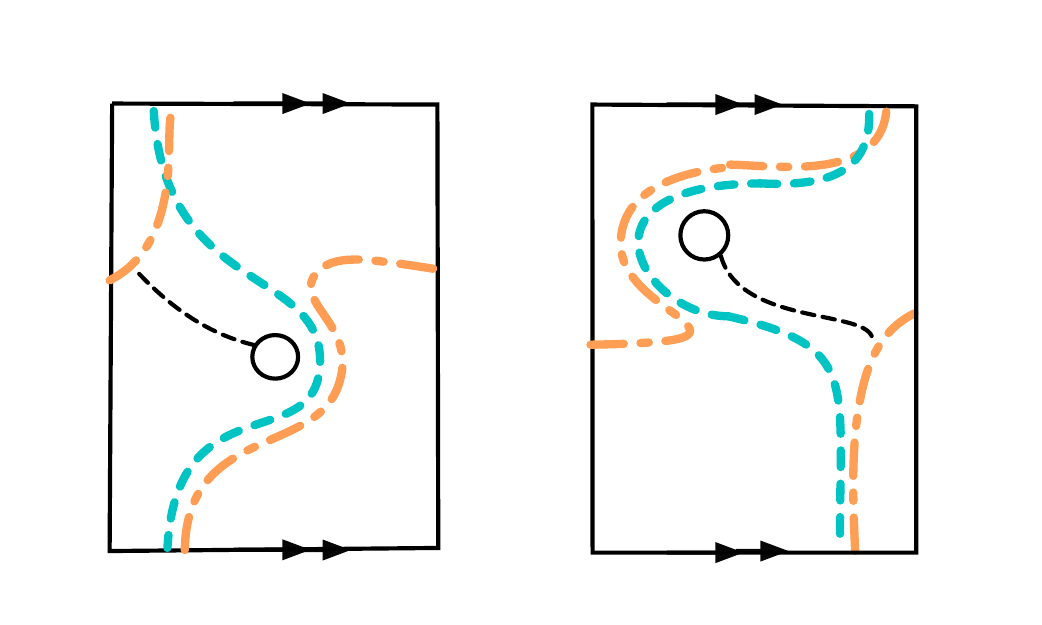}
    \put(23,1){$\lambda_1$}
    \put(70,1){$\lambda_2$}
        \end{overpic}
    \caption{The image of the monodromy on the bottom piece of the surface, according to whether it is $\lambda_1$ (when $k>\ell$) or $\lambda_2$ when ($k<\ell$).}
    \label{fig:monodromy for bottom piece}
\end{figure}

As by Definition~\ref{def:simple_pair} this action on a graph implies $x\vee y$ is a simple pair, this completes the proof.
\end{proof}

By Theorem~\ref{thm: maonodromies are simple pairs} the set of monodromies of fibrations are a subset of the set of all simple pairs homeomorphisms, thus to prove Theorem \ref{thm: 1-1 monodromies and simple pairs} it remains to show the other direction:

\begin{thm}
\label{thm: Any pair is a monodromy}
For any pair of Farey neighbors $\frac{p}{q}$ and $\frac{r}{s}$, there is a simple pair monodromy $\frac{p}{q}\vee\frac{r}{s}$ that is obtained as a monodromy of some fibration of the Whitehead link complement.
\end{thm}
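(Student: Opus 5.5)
We need to show that every Farey edge $\frac{p}{q}\vee\frac{r}{s}$ in $[0,1]$ appears as the monodromy of some fibration. The plan is to match the fibration $(k,\ell)$, with $k,\ell$ positive and coprime, to the Farey edge whose orbits have periods equal to the numbers of points of the two orbits on the fiber $\lambda_{k,\ell}$. Then we show that this assignment is a bijection onto the set of Farey edges.

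\textbf{Step 1: read off the orbits.} By the proof of Theorem~\ref{thm: maonodromies are simple pairs}, the fiber $\lambda_{k,\ell}$ is a torus with $k+\ell$ punctures. Its punctures lie on the two link components: $k$ points on one orbit and $\ell$ points on the other, since $\lambda_1$ meets one component once and $\lambda_2$ meets the other once. So the simple pair coming from $(k,\ell)$ consists of orbits of periods $k$ and $\ell$.

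\textbf{Step 2: compute the rotation numbers.} For this I follow the sliding description in the proof of Theorem~\ref{thm: maonodromies are simple pairs}. The surface is stacked along the dashed vertical line of Figure~\ref{fig:schematic}, and each application of the monodromy moves every rectangle up one level. The only exception is the top rectangle, which wraps to the bottom; this wrap contributes the horizontal shear. Counting how many vertical loops each puncture advances per iteration then gives the rotation numbers of the two orbits. I expect them to be $\frac{a}{k}$ and $\frac{b}{\ell}$, where $a$ and $b$ are determined by the Euclidean-type relations $k b-\ell a=\pm1$. The check $(1,1)\mapsto\frac01\vee\frac11$ agrees with Proposition~\ref{prop:The 1-1 case}. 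In any case, the earlier lemma guarantees that the two rotation numbers are Farey neighbors. This step also shows that the denominators of the pair are exactly $k$ and $\ell$.

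\textbf{Step 3: surjectivity.} Given Farey neighbors $\frac{p}{q},\frac{r}{s}$, their denominators $q$ and $s$ are coprime, since $|ps-rq|=1$. So $(q,s)$ is a primitive class in the interior of $\mathscr{C}$, and by Step~2 the fibration $(q,s)$ gives a simple pair with denominators $q$ and $s$. It remains to see that this pair is exactly $\frac{p}{q}\vee\frac{r}{s}$. A Farey neighbor pair in $[0,1]$ is determined by its pair of denominators, up to the reflection $x\mapsto 1-x$ and the swap of roles. For fixed coprime $q,s$, the solutions of $ps-rq=\pm1$ with $0\le p\le q$, $0\le r\le s$ are exactly two edges related by $x\mapsto1-x$. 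The earlier Claim shows that the simple orbits with rotation numbers $\frac pq$ and $1-\frac pq$ are conjugate, via a rotation by $\pi$ that preserves the graph of Definition~\ref{def:simple_pair}. Applying the same rotation to the whole graph makes the two simple pair homeomorphisms conjugate. Hence the monodromy of $(q,s)$ realizes $\frac{p}{q}\vee\frac{r}{s}$.

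\textbf{Main obstacle.} The hardest part is Step~2: showing rigorously that the combinatorics of the stacked rectangles produce precisely the Farey pair with denominators $k$ and $\ell$, and not just some pair with those periods. The first thing I would try is induction along the Farey tree, which mirrors the Euclidean algorithm. Passing from $(k,\ell)$ with $k>\ell$ to $(k-\ell,\ell)$ corresponds to the decomposition $\lambda_{k,\ell}=(k-\ell)\lambda_1+\ell T$. I would then check that adding a copy of $T$ changes the rotation numbers by the Farey operation $\oplus$, taking Proposition~\ref{prop:The 1-1 case} as the base case.
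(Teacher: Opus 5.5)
Your proposal is correct, but it finishes the proof differently from the paper.

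\textbf{The paper's route.} The paper fixes coprime $q>s$ and observes that exactly two Farey edges have denominators $q,s$: one with $ps-rq=1$, one with $ps-rq=-1$, related by $x\mapsto 1-x$. It realizes them by two \emph{different} fibrations, $q\lambda_1+s\lambda_2$ and $s\lambda_1+q\lambda_2$. Whether the bottommost piece of the fiber is a copy of $\lambda_1$ or of $\lambda_2$ decides on which side of the vertical edge of $R_1$ the once-punctured disk in $f(l)$ sits. That in turn decides whether the shorter orbit rotates slightly faster or slightly slower than the longer one.

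\textbf{Your route.} You use only the fibration $(q,s)$. The periods $q,s$ and the Farey-neighbor lemma, applied to the simple pair supplied by Theorem~\ref{thm: maonodromies are simple pairs}, narrow its monodromy to one of those two edges. The rotation by $\pi$ then identifies the two edges. Equivalently, it reverses the orientations of $\alpha$ and of every $\beta_i$ at once. This keeps $h$ in standard shear form while negating both rotation numbers and flipping the side of the punctured disk.

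Your Step~2 is already complete at ``In any case''. The lemma forces the rotation numbers, written over the periods, to satisfy $|ps-rq|=1$. So the Farey-tree induction in your last paragraph is not needed for this statement.

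\textbf{Comparison.} Your route is shorter and avoids the left/right analysis of $f(l)$, which is the least explicit part of the paper's argument. The price is that it shows each monodromy carries both labels $\frac{p}{q}\vee\frac{r}{s}$ and $\frac{q-p}{q}\vee\frac{s-r}{s}$. So it cannot distinguish the fibrations $(q,s)$ and $(s,q)$. The paper assigns a specific labeled edge to each fibration under its fixed orientation conventions, and it relies on that assignment for the one-to-one correspondence of Theorem~\ref{thm: 1-1 monodromies and simple pairs}.
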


\begin{proof}
    Choose $q$ and $s$ relatively prime. There is a unique solution to each of the equations
    \[
ps-rq=1,\  p's-r'q=-1
    \]
    
so that $0<p<q$ and $0<r<s$, thus there are exactly two pairs that are Farey neighbors with given denominators $q$ and $s$. The solution $p'$ and $r'$ to the second equations satisfies $p'=q-p$ and $s'=r-s$ when $p,s$ is the solution to the first equation.
Therefore, in one of these pairs the fraction with denominator $q$ is the larger number and in the other it is smaller than the fraction with denominator $s$.

Next, assuming $q>s$ and denoting the longer orbit by $x$ as in Section~\ref{sec:Pairs}, the $x$ orbit has $q$ points and the $y$ orbit $s$ points.
Consider the fibration of the Whitehead link complement corresponding to $q\lambda_1+s\lambda_2$.
The $x$ orbit intersects the $q$ $\lambda_1$ surfaces, 
and the closed curve $f(l)$ in the rectangle $R_1$
 separates the $x$ orbit on its left from the $y$ orbit on its right. Therefore, it is isotopic in the complement of the orbits to the edge on the right of $R_1$ with a small loop around the shorter $y$ orbit to its left. Thus, as there is a point of the $x$ orbit in each rectangle and the vertical edges thus have the same rotation number of the $x$ orbit, this corresponds to the case 
the $y$ orbit with rotation number $r/s$ is rotating a little faster than the $x$ orbit, i.e.
$\frac{r}{s}>\frac{p}{q}$, or $ps-rq=-1$.
In the case $s\lambda_1+q\lambda_2$, the longer $x$ orbit intersects the $\lambda_2$ surfaces, and $f(l)$ is isotopic to the left edge or $R_1$ plus a loop to its right. That is, the $y$ orbit with rotation number $r/s$ is rotating a little slower than the $x$ orbit, i.e. 
$\frac{r}{s}<\frac{p}{q}$, or $ps-rq=1$.

Thus we find both pairs appear as fibrations of the Whitehead link complement, and therefore all possible pairs appear as the monodromy of some fibration.
\end{proof}

Note that the equivalence between the simple orbit corresponding to $\frac{p}{q}$ and the one corresponding to $1-\frac{p}{q}$ can now be understood as the result of the symmetry between the two components of the Whitehead link.

\begin{remark}
The above proof does not give much information about the ordering of the $T,\lambda_1$ and $\lambda_2$ pieces within a fiber $S=k\lambda_1+\ell\lambda_2$. One way to determine explicitly the order in which the pieces are glued 
is given by the Christoffel words. For example Figure~\ref{fig:christoffel} shows two Christoffel words corresponding to the surface $S=7\lambda_1+4\lambda_2=4 T+3 \lambda_1$. They both give the same order of pieces up to cyclic order along the horizontal direction.
\end{remark}

\begin{figure}[ht!]
    \centering
   \begin{overpic}[width=6cm]{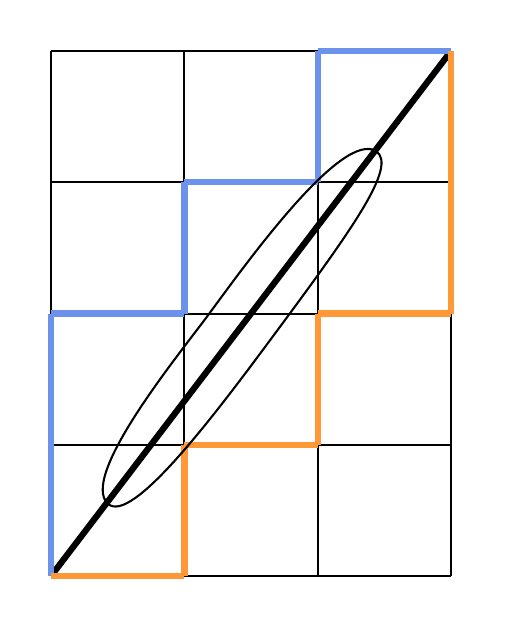}
   \put(60,2){$\lambda_1$}
   \put(2,79){$T$}
   \put(18,9){0}
   \put(31,15){1}
   \put(40,30){0}
   \put(52,38){1}
   \put(60,51){0}
   \put(73,59){1}
   \put(73,80){1} 
   \put(2,15){1}
   \put(2,38){1}
   \put(18,51){0}
   \put(26,59){1}
   \put(40,72){0}
   \put(47,80){1}
   \put(60,93){0} 
   \put(20,27){1}
   \put(27.5,37.5){0}
   \put(35,48){1}
   \put(49,65){0}
   \put(52.5,69.5){1}
   \end{overpic}
 \caption{The image of the monodromy on the bottom piece of the surface, according to whether it is $\lambda_1$ (when $k>\ell$) or $\lambda_2$ when ($k<\ell$).} \label{fig:christoffel}
\end{figure}

\begin{cor}
For a primitive integral class $(k, \ell) \in \mathcal{C}$, 
the stable foliation $\mathcal{F}_{(k, \ell)}$ of the monodromy $\Phi_{(k,\ell)}$ of the fibration on the Whitehead link exterior 
$W$  has the following properties. 
\begin{itemize}
\item 
$\mathcal{F}_{(k+\ell)}$ is $1$-pronged at each boundary component of the fiber $F_{(k,\ell)}$. 

\item 
$\mathcal{F}_{(k+\ell)}$ has $(k+\ell)$  $3$-pronged singularities in the interior of the fiber $F_{(k,\ell)}$. 
\end{itemize}
\end{cor}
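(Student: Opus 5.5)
The plan is to use the fact that all fibers in the fibered cone $\mathscr{C}$ come from one pseudo-Anosov suspension flow. By Fried's theory, which refines \cite{Thurston1986Norm}, the suspension flow of $\Phi_{(1,1)}$ on $W$ is transverse to every fiber $F_{(k,\ell)}$ with $(k,\ell)\in\mathscr{C}$. Its first return map to $F_{(k,\ell)}$ is the pseudo-Anosov monodromy $\Phi_{(k,\ell)}$. The stable foliation $\mathcal{F}_{(k,\ell)}$ is then the intersection of $F_{(k,\ell)}$ with a single two-dimensional stable lamination $\Lambda$ in $W$, and $\Lambda$ does not depend on the class.

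The key consequence is a correspondence of singularities. Singular leaves of $\mathcal{F}_{(k,\ell)}$ are exactly the points where $F_{(k,\ell)}$ meets the singular closed orbits of the flow. A point on an orbit whose stable leaf has $p$ prongs meets the fiber in a $p$-pronged singularity. The boundary of $F_{(k,\ell)}$ meets the cusps of $W$, and the prong number there is read off from the degeneracy slopes of the two cusps, which are also fixed independently of the class.

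So the first step is to compute $\mathcal{F}_{(1,1)}$. For this I will use the explicit monodromy from Proposition~\ref{prop:The 1-1 case} together with the train track and graph map given by the simple pair structure. I expect a stable foliation on the twice punctured torus that is $1$-pronged at each of the two punctures and has exactly two interior $3$-pronged singularities. This agrees with the Euler--Poincaré formula:
\[
\sum (2-p_s) = 2\chi(\Sigma_{1,2}) \cdot (-1)\cdot(-1),
\]
that is, two $1$-prongs contribute $+2$ and two $3$-prongs contribute $-2$, totalling $0=\chi(T^2)$. The sum therefore balances. I still need to check that the Bestvina--Handel train track for the map in Figure~\ref{fig:1-1 monodromy} actually realizes this configuration and not, say, a single $4$-prong. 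I would do this by examining the infinitesimal edges at the two vertices of the graph.

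Next I identify the singular orbits. Each $3$-prong of $\mathcal{F}_{(1,1)}$ lies on a closed singular orbit $\gamma$ of the flow. The number of times $F_{(k,\ell)}$ meets $\gamma$ is the algebraic intersection $\langle (k,\ell), [\gamma]\rangle$, which is positive because of transversality. I would show that the two $3$-prong orbits pass once through the $\lambda_1$ region and once through the $\lambda_2$ region of Figure~\ref{fig:subsurfaces}, respectively. Equivalently, their classes are dual to $\lambda_1$ and $\lambda_2$, giving intersection numbers $k$ and $\ell$. The total is then $k+\ell$ interior $3$-pronged singularities. As a consistency check, Euler--Poincaré on $\Sigma_{1,k+\ell}$ with $k+\ell$ one-pronged punctures forces exactly $k+\ell$ three-prongs, provided all interior singularities are $3$-pronged.

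The boundary claim is handled at the cusps. For each cusp, the $1$-prong at the corresponding puncture of $F_{(1,1)}$ gives a single prong curve, the degeneracy slope, on that cusp torus. Since $(1,1)$ has boundary meeting each cusp once, I need to show that $\partial F_{(k,\ell)}$ meets the degeneracy slope of each cusp in the right number of points. Then each boundary component of $F_{(k,\ell)}$ is $1$-pronged, and there are $k$ and $\ell$ components on the two cusps. This matches the count of $k+\ell$ punctures of $\Sigma_{1,k+\ell}$.

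The main obstacle is the boundary step: proving that the degeneracy slope meets each individual boundary component exactly once rather than several times. I would get this from the explicit picture in Theorem~\ref{thm: maonodromies are simple pairs}. There, every puncture of $F_{(k,\ell)}$ lies in a rectangle, and the graph map permutes the peripheral loops cyclically without winding, as in the comment to Figure~\ref{fig:puzzle}. The flow then returns each boundary circle to a single point, so the puncture is $1$-pronged, exactly as for the $\frac{0}{1}\vee\frac{1}{1}$ case. Euler--Poincaré then pins down the interior count, which makes the intersection-number computation of the previous step a cross-check rather than essential.
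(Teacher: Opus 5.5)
Your route differs from the paper's. The paper reads the singularity data of $\mathcal{F}_{(1,1)}$ off an explicit train track for $\frac{0}{1}\vee\frac{1}{1}$. It then asserts that this train track induces one on every fiber through the cut-and-paste decomposition $\lambda_{k,\ell}=(k-\ell)\lambda_1+\ell T$ (or $kT+(\ell-k)\lambda_2$) of Theorem~\ref{thm: maonodromies are simple pairs}, so singularities are counted piece by piece.

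You instead move the data from $F_{(1,1)}$ to $F_{(k,\ell)}$ with Fried's cross-section theory. Interior singularities of every fiber lie on the singular orbits of one suspension flow. All of these orbits pass through $F_{(1,1)}$, so they are $3$-pronged. Euler--Poincar\'e on the torus then forces exactly $k+\ell$ of them, once the punctures are known to be $1$-pronged. This is the standard route, and it is more robust: the paper never checks that its glued train track is invariant or carries the stable foliation. The paper's route, if completed, would give an explicit train track on each fiber. Both routes need the same input at $(1,1)$: two $1$-pronged punctures and two $3$-prongs rather than one $4$-prong. You rightly flag this; the paper reads it off its figure.

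The step you call the main obstacle is actually immediate, and the argument you give for it does not work. That the graph map permutes peripheral loops without winding says nothing about prong numbers: a puncture can be $p$-pronged for any $p$ while its peripheral loop maps to a peripheral loop. The phrase ``the flow returns each boundary circle to a single point'' has no meaning.

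What you need is that the boundary slopes do not depend on the class. $\partial\lambda_1$ is the longitude $l_1$ of the first cusp, and $\lambda_1$ has no boundary on the second cusp; symmetrically for $\lambda_2$. So $\partial F_{(k,\ell)}$ consists of $k$ parallel copies of $l_1$ and $\ell$ of $l_2$, each with the slope of the corresponding boundary component of $F_{(1,1)}$. The prong number at a boundary curve $c$ on cusp $i$ is the number of points where $c$ meets the prong curves (degeneracy locus) of the stable lamination on that cusp torus. This locus does not depend on the fiber. For $c=l_i$ the count is $1$, because $F_{(1,1)}$ is $1$-pronged there. Hence every boundary component of $F_{(k,\ell)}$ is $1$-pronged, and Euler--Poincar\'e finishes the proof.

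Two smaller points. First, your displayed Euler--Poincar\'e identity is wrong as written, since $2\chi(\Sigma_{1,2})=-4$. The correct form treats the punctures as $1$-prong singularities of the closed torus: $\sum_s(2-p_s)=2\chi(T^2)=0$. Second, the claim that the two $3$-pronged orbits are distinct, with classes dual to $\lambda_1$ and $\lambda_2$, is both unjustified and unnecessary. You have not ruled out that $\Phi_{(1,1)}$ swaps the two $3$-prongs, in which case they lie on a single closed orbit. The count is unaffected either way.
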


\begin{proof}
    Figure \ref{fig:train track} shows a train track construction for the simple pair $0\vee1$, with one 1-prong and two 3-prong singularities. This induces a train track structure on all other fibrations by the construction in Theorem~\ref{thm: maonodromies are simple pairs}, with singularities as required.

\begin{figure}[ht!]
    \centering
    \includegraphics[width=10cm]{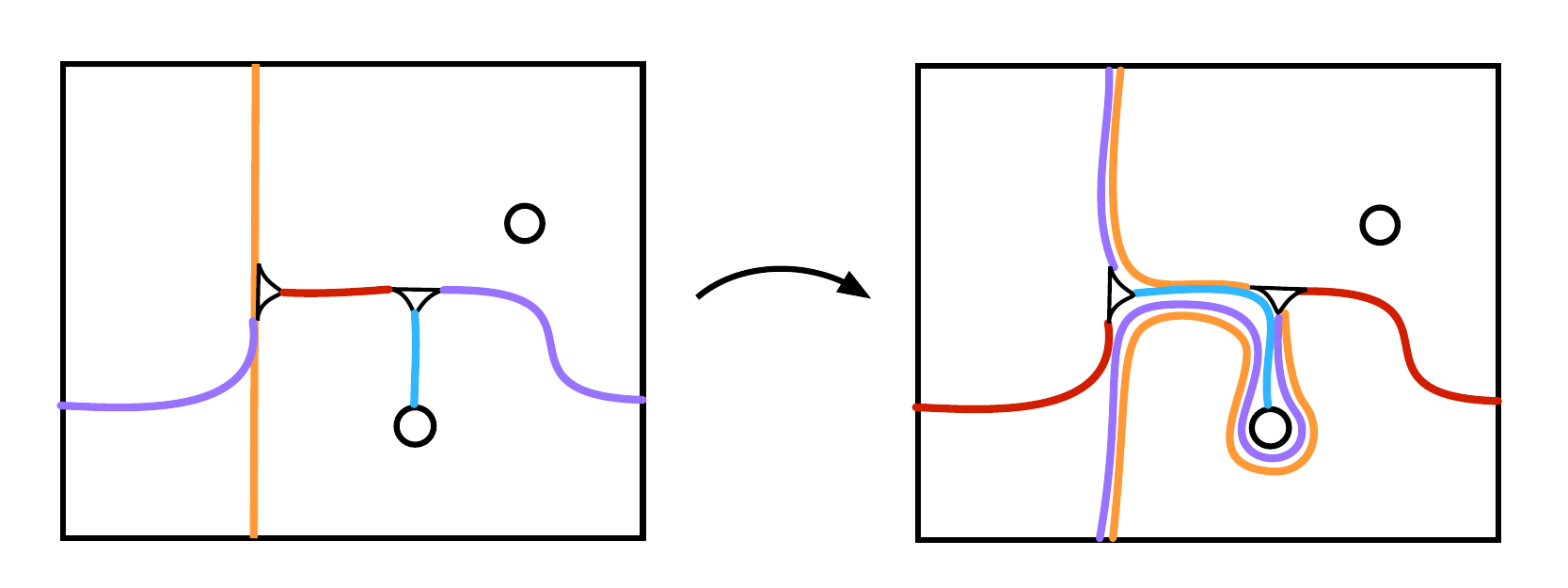}
    \caption{The train track for $0\vee1$.}
    \label{fig:train track}
\end{figure}
\end{proof}

\bibliographystyle{plain}
\bibliography{Bibliography.bib}

\end{document}